\documentclass[10pt, reqno]{amsart}
\usepackage{amsmath, amsthm, hyperref}
\usepackage{times}
\usepackage{amsfonts}
\usepackage{amssymb}
\usepackage{xcolor} 
\usepackage{graphicx} % Required for inserting images
\usepackage{cite} % deals with multiple citations better

\usepackage{enumitem}
\setenumerate{itemsep=5pt} % adjust space between list items
\setitemize{itemsep=5pt, leftmargin=10pt} % adjust space between list items

 % changes enumerate to use a,b,c,d
 % changes second level enumerate to use i,ii,iii,iv

\newcommand{\mdim}{\operatorname{dim_{\mathrm{mat}}}}
\newcommand{\Ap}{\operatorname{Ap}}
\newcommand{\ord}{\operatorname{ord}}
\newcommand{\M}{\mathsf{M}}
\newcommand{\MM}{\mathcal{M}}
\newcommand{\N}{\mathbb{N}}
\newcommand{\Q}{\mathbb{Q}}
\newcommand{\semigroup}[1]{\langle #1 \rangle}
\newcommand{\s}{\mathfrak{s}}
\newcommand{\vecspan}{\operatorname{span}}

\renewcommand{\S}{\mathcal{S}}
\newcommand{\V}{\mathcal{V}}
\newcommand{\Z}{\mathbb{Z}}
\newcommand{\0}{{\color{lightgray}0}}

\renewcommand{\vec}[1]{\mathbf{#1}}
\renewcommand{\pmod}[1]{\,\,(\operatorname{mod} #1)}

\newcommand{\tinymatrix}[4]{\big[ \begin{smallmatrix} #1 & #2 \\ #3 & #4 \end{smallmatrix} \big]}

\newtheorem{thm}{Theorem}
\newtheorem{prop}[thm]{Proposition}

\newtheorem{lemma}[thm]{Lemma}

\theoremstyle{definition}
\newtheorem{ex}[thm]{Example}

\newtheorem{prob}[thm]{Problem} 

\newtheorem{remark}[thm]{Remark}

% \newacronym{lcm}{LCM}{Least Common Multiple}

\allowdisplaybreaks

\begin{document}
\title[Numerical semigroups from rational matrices {IV}]{Numerical semigroups from rational matrices {IV}: computation of the matricial dimensions of numerical semigroups with small Frobenius number or genus}

\author{Theo Chinn}
\address{Department of Mathematics and Statistics, Pomona College, 610 N. College Ave., Claremont, CA 91711, USA}
\email{twcz2023@mymail.pomona.edu}

\author{Junshu Feng}
\address{Department of Mathematics and Statistics, Pomona College, 610 N. College Ave., Claremont, CA 91711, USA}
\email{jfwv2022@mymail.pomona.edu}

\author{Stephan Ramon Garcia}
\address{Department of Mathematics and Statistics, Pomona College, 610 N. College Ave., Claremont, CA 91711, USA}
\email{stephan.garcia@pomona.edu}
\urladdr{\url{https://stephangarcia.sites.pomona.edu/}}

\thanks{SRG was partially supported by NSF grants DMS-2452084 and DMS-2054002.}

\author{Peiting Jiang}
\address{Department of Mathematics and Statistics, Pomona College, 610 N. College Ave., Claremont, CA 91711, USA}
\email{pjan2023@mymail.pomona.edu}

\keywords{semigroup; numerical semigroup; matricial dimension; companion matrix; integer linear programming.}

\begin{abstract}
We introduce a module-theoretic approach and a linear-programming method to compute the matricial dimension of numerical semigroups.  We use these to compute the matricial dimension of every numerical semigroup with Frobenius number at most $10$ or genus at most $6$.  Many of these evaluations were beyond the scope of previous techniques.
\end{abstract}

\maketitle

%%%%%%%%%%%%%%%%%%%%%%%%%%%%%%%%%%%%%%%%%%%%%%%%%%%%%%%%%%%%%%%%
\section{Introduction}

In this paper, the term \emph{semigroup} refers to an additive subsemigroup of $\N = \{0,1,2,\ldots\}$.  A \emph{numerical semigroup} is a subsemigroup of $\N$ with finite complement \cite{Assi, Rosales}.  The notation $\semigroup{\cdot}$ denotes the semigroup generated by the indicated set.  For example, $\semigroup{4,7} = \{0, 4, 7, 8, 11, 12, 14, 15, 16, 18, 19, 20,\ldots\}$.  The \emph{Frobenius number} $F(S)$ of a numerical semigroup is the largest natural number not in $S$.  For example, $F(\semigroup{4,7}) = 17$. Each numerical semigroup has a unique minimal system of generators $n_1 < n_2 < \cdots < n_k$ such that $\gcd(n_1,n_2,\ldots,n_k) = 1$ and $S = \semigroup{n_1,n_2,\ldots,n_k}$ \cite[Thm.~2.7]{Rosales}.  The smallest generator $n_1$ is the \emph{multiplicity} $m(S)$ of $S$. The \emph{genus} $g(S)$ of $S$ is the cardinality of $\N \setminus S$.

Let $\M_d(\cdot)$ denote the set of $d \times d$ matrices with entries in the indicated set.  Let $\Z$ and $\Q$ denote the set of integers and rational numbers, respectively. The \emph{exponent semigroup} of $A \in \M_d(\Q)$ is 
\begin{equation*}
\S(A) = \{ n \in \N : A^n \in \M_d(\Z)\}.    
\end{equation*}
As its name suggests, the exponent semigroup of a rational matrix is a semigroup (not necessarily numerical) since it contains $0$ and is closed under addition.  Each additive subsemigroup of $\N$ is of the form $\S(A)$ for some $A \in \M_d(\Q)$ with $d \leq m(S)$ \cite[Thm.~1.1]{NSRM2}. In fact, the \emph{representing matrix} $A$ can be taken to be a weighted permutation matrix (the $1$s being replaced with suitable nonzero constants) for a cycle of length $d$.  This result improved the previous best-known bound $d \leq F(S) + 1$ from \cite[Thm.~6.2]{NSRM1}; in that case, the representing matrix can be taken to be a weighted nilpotent Jordan block of size $F(S) + 1$.

The \emph{matricial dimension} $\mdim S$ of a semigroup $S$ is the smallest dimension $d$ such that $S = \S(A)$ for some $A \in \M_d(\Q)$. Following \cite{NSRM3}, $S$ is \emph{dimension-$d$ realizable} if $S= \S(A)$ for some $A \in \M_d(\Q)$; this is equivalent to $\mdim S \leq d$ since $\S(A \oplus 0) = \S(A)$, in which $0$ denotes a square zero matrix.  One has the upper bound $\mdim S \leq m(S)$ \cite[Thm.~1.1]{NSRM2}, which is often attained.  However, $\mdim S$ is sometimes significantly smaller than $m(S)$. An open, and apparently difficult, problem is the determination of the matricial dimension of an arbitrary semigroup.  This paper takes several steps toward this goal.

This article is organized as follows. Section \ref{Section:Module} introduces a module-theoretic approach (Proposition \ref{Proposition:Module}) for finding representing matrices for semigroups.  This places several previous ad-hoc computations on a systematic footing while simultaneously permitting the computation of the matricial dimension for certain semigroups whose matricial dimension was previously unknown.  For example, we obtain a short module-theoretic proof (Theorem \ref{Theorem:NSRM2}) of a generalization of the main result of \cite{NSRM2}.  This provides dramatic improvements over previous matricial-dimension bounds for certain reducible semigroups (Theorem \ref{Theorem:Intersection}).  We collect several more ideas in Section \ref{Section:Multiplicity}, in which we determine the matricial dimension of all semigroups of multiplicity at most $3$. In Section \ref{Section:Companion}, we introduce a novel integer linear-programming approach, the ``companion-matrix method'', for obtaining representing matrices.  Taken together, our new results are sufficient to compute the matricial dimensions of all numerical semigroups with $F(S) \leq 10$ or $g(S) \leq 6$ (Section \ref{Section:Frobenius}).
We conclude in Section \ref{Section:Open} with several open questions spurred by our investigations.

\medskip\noindent\textbf{Acknowledgments.}
We thank Bjorn Poonen for several helpful suggestions.  In particular, his evaluation $\mdim\semigroup{5,7,8,9,11} = 2$, provided us with many new ideas.  We thank Christopher O'Neill for his help with Sage and GAP in the early stages of this project, and for several helpful comments on more recent drafts.  Special thanks go to Andrew Wilson of the Pomona College High Performance Computing (HPC) team.

%%%%%%%%%%%%%%%%%%%%%%%%%%%%%%%%%%%%%%%%%%%%%%%%%%%%%%%%%%%%%
\section{Module-theoretic approach}\label{Section:Module}

Our first theorem provides an abstract framework that relates exponent semigroups to $\Z$-modules in $\Q$-vector spaces.  This recovers several previous results (Theorem \ref{Theorem:LatticeExamples} parts (a) and (b)), generalizes a fundamental construction (Theorem \ref{Theorem:NSRM2}) that leads to vast improvements in certain cases (Theorem \ref{Theorem:Intersection}), and produces new results such as Theorem \ref{Theorem:LatticeExamples}(c).

\begin{prop}\label{Proposition:Module}
    Let $\V$ be a $d$-dimensional $\Q$-vector space containing $\MM \subseteq \V$, a $\Z$-module of rank $d$.  Let $L \in \operatorname{End}(\V)$ and let $\beta$ denote an integral basis for $\MM$.    
    Then $A = {}_{\beta}[L]_{\beta} \in \M_d(\Q)$ satisfies $\S(A) = \{n\in \N: L^n \MM \subseteq \MM \}$.
\end{prop}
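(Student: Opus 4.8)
The plan is to unwind both sides of the claimed equality using the correspondence between the linear map $L$ on $\V$ and its matrix $A = {}_{\beta}[L]_{\beta}$ with respect to the integral basis $\beta = (v_1,\ldots,v_d)$ of $\MM$. The key observation is that the coordinate isomorphism $\varphi \colon \V \to \Q^d$ sending $v_i \mapsto e_i$ carries $\MM$ exactly onto $\Z^d$ (because $\beta$ is an \emph{integral} basis, i.e.\ a $\Z$-module basis of $\MM$), and it intertwines $L$ with left-multiplication by $A$: that is, $\varphi(Lv) = A\varphi(v)$ for all $v \in \V$, which is just the definition of the matrix of a linear map in a given basis. Consequently $\varphi(L^n v) = A^n \varphi(v)$ for every $n \in \N$ and every $v \in \V$.

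First I would fix notation and record the two facts above: $\varphi(\MM) = \Z^d$ and $\varphi \circ L^n = (A^n \cdot) \circ \varphi$. Then I would argue the two inclusions. Suppose $n \in \S(A)$, so $A^n \in \M_d(\Z)$. For any $w \in \MM$, write $x = \varphi(w) \in \Z^d$; then $\varphi(L^n w) = A^n x \in \Z^d$ since $A^n$ has integer entries, hence $L^n w \in \varphi^{-1}(\Z^d) = \MM$. As $w$ was arbitrary, $L^n \MM \subseteq \MM$. Conversely, suppose $L^n \MM \subseteq \MM$. For each basis vector $v_j$ we have $L^n v_j \in \MM$, so $\varphi(L^n v_j) = A^n e_j \in \Z^d$; but $A^n e_j$ is precisely the $j$th column of $A^n$, so every column of $A^n$ lies in $\Z^d$, i.e.\ $A^n \in \M_d(\Z)$, i.e.\ $n \in \S(A)$. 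Combining the two inclusions gives $\S(A) = \{n \in \N : L^n \MM \subseteq \MM\}$.

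There is no serious obstacle here; the argument is essentially a bookkeeping exercise once the right dictionary is in place. The one point that genuinely must be used — and the only place the hypotheses do real work — is that $\beta$ is an \emph{integral} basis, so that the $\Z$-span of $\beta$ is all of $\MM$ (not merely a finite-index submodule); this is what makes $\varphi^{-1}(\Z^d) = \MM$ hold on the nose and is what forces the rank of $\MM$ to equal $d = \dim_\Q \V$. I would also note in passing that $A$ is well-defined as an element of $\M_d(\Q)$ because $L$ maps $\V$ into $\V$ and $\V$ is spanned over $\Q$ by $\beta$, so each $Lv_j$ has rational coordinates in $\beta$. With those remarks, the proof is complete.
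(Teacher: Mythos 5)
Your proof is correct and is essentially the paper's argument: the paper compresses the same dictionary into the chain of equivalences $L^n\MM\subseteq\MM \iff {}_{\beta}[L^n]_{\beta}\in\M_d(\Z) \iff A^n\in\M_d(\Z)$, which is exactly what your coordinate isomorphism $\varphi$ (with $\varphi(\MM)=\Z^d$ and $\varphi\circ L^n = A^n\varphi$) makes explicit. Your version just spells out the two inclusions that the paper leaves implicit.
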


\begin{proof}
Observe that
\begin{equation*}
    L^n \MM \subseteq \MM
    \iff {}_{\beta}[L^n]_{\beta} \in \M_d(\Z)
    \iff {}_{\beta}[L]_{\beta}^n \in \M_d(\Z) 
    \iff A^n \in \M_d(\Z).\qedhere    
\end{equation*}  
\end{proof}

The following is a generalization of \cite[Thm.~1.1]{NSRM2}. The module-theoretic proof below is completely different and avoids the use of the Kunz inequalities altogether. 

\begin{thm}\label{Theorem:NSRM2}
Let $S$ be a numerical semigroup.  If $d \in S \setminus \{0\}$, then there is an $A \in \M_d(\Q)$ such that $\S(A) = S$.  In particular, $\mdim S \leq m(S) = \min S \setminus \{0\}$.
\end{thm}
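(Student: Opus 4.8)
The goal is to realize a numerical semigroup $S$ as $\S(A)$ for some $A \in \M_d(\Q)$ whenever $d \in S \setminus \{0\}$. Given Proposition~\ref{Proposition:Module}, the natural strategy is to produce a $d$-dimensional $\Q$-vector space $\V$, a rank-$d$ $\Z$-module $\MM \subseteq \V$, and an endomorphism $L$ so that $\{n \in \N : L^n \MM \subseteq \MM\} = S$; then $A = {}_{\beta}[L]_{\beta}$ does the job. The most economical choice is $\V = \Q[x]/(x^d)$ (or equivalently $\Q^d$ with a nilpotent shift) and $L = $ multiplication by $x$, since $L^d = 0$ will automatically force all $n \geq d$ into $\S(A)$ — consistent with the fact that once $d \in S$, cofiniteness of $S$ means $S$ must contain everything past some point, and we want $L$ nilpotent of the right order. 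The module $\MM$ should be chosen as a "scaled" lattice, something like $\MM = \vecspan_{\Z}\{c_0, c_1 x, c_2 x^2, \ldots, c_{d-1}x^{d-1}\}$ for carefully chosen nonzero rationals $c_i$, so that $L^n \MM \subseteq \MM$ becomes a set of divisibility conditions on the ratios $c_{i}/c_{i+n}$ that encode membership in $S$.

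First I would fix notation: write $S = \semigroup{n_1, \ldots, n_k}$ with $n_1 = m(S) = m$, let $F = F(S)$, and work inside $\V = \Q^d$ with $L$ the nilpotent backward (or forward) shift sending $e_i \mapsto e_{i-1}$ (and $e_1 \mapsto 0$). I would then seek $\MM = \bigoplus_{i=0}^{d-1} \Z \cdot (w_i e_i)$ for positive rationals $w_i$; then $L^n$ maps $w_i e_i \mapsto w_i e_{i-n}$, so $L^n \MM \subseteq \MM$ holds iff $w_{i-n}/w_i \in \Z$ for all $i \geq n$, i.e. iff $w_j / w_{j+n} \in \Z$ for $0 \le j \le d-1-n$. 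So the whole problem reduces to: choose positive rationals $w_0, w_1, \ldots, w_{d-1}$ such that, for each $n \in \{1, \ldots, d-1\}$, we have $w_j/w_{j+n} \in \Z$ for all valid $j$ if and only if $n \in S$. (For $n \geq d$ the condition is vacuous and $n \in S$ automatically since $d \in S$, so consistency holds there.)

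The key construction step is to exhibit such weights. A clean way: for a prime $p$, set $w_i = p^{-a_i}$ where $a_0 > a_1 > \cdots > a_{d-1}$ is a strictly decreasing sequence of nonnegative integers to be chosen; then $w_j/w_{j+n} = p^{a_{j+n} - a_j}$, which lies in $\Z$ iff $a_{j+n} \ge a_j$, i.e. never (since the sequence is decreasing) — that's too crude. Instead I would use a product over several primes, or more flexibly allow $w_i = q_i$ arbitrary positive rationals and think of it as a combinatorial problem. The cleanest route, mirroring the weighted-Jordan-block construction from \cite{NSRM1} alluded to in the introduction, is: choose integers $1 = t_0 \mid t_1 \mid t_2 \mid \cdots$ is not quite it either; rather, pick $w_i$ so that $w_j / w_{j+n}$ is an integer precisely when the "gap structure" matches $S$. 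Concretely, let $a_0, \ldots, a_{d-1} \in \Z$ and $w_i = 2^{a_i}$; then $n \in \S(A)$ iff $a_j \le a_{j+n}$ for all $0 \le j \le d-1-n$. We need this to be equivalent to $n \in S$. I would take $a_i$ to be a function recording membership in $S$ — for instance something like $a_i = \#\{\,s \in S : s \le i\,\}$ or a weighted count — and verify the monotonicity equivalence. Getting the exact combinatorial sequence $(a_i)$ to work is the crux.

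\textbf{Main obstacle.} The hard part is precisely this last step: constructing the lattice $\MM$ (equivalently the weights $w_i$) so that the divisibility conditions cut out $S$ \emph{exactly} — not a larger or smaller semigroup. One must simultaneously guarantee (i) $n \in S \Rightarrow L^n\MM \subseteq \MM$ and (ii) $n \notin S, \ 1 \le n \le d-1 \Rightarrow L^n \MM \not\subseteq \MM$. Direction (i) will follow if the $w_i$ satisfy a "superadditivity" compatible with the semigroup structure (if $w_j/w_{j+n_1}$ and $w_{j+n_1}/w_{j+n_1+n_2}$ are integers then so is their product), which is why it suffices to handle the minimal generators $n_1, \ldots, n_k$; direction (ii) requires that for each gap $n < d$ there is at least one index $j \le d-1-n$ with $w_j/w_{j+n} \notin \Z$, and arranging this for \emph{all} gaps at once with a single sequence of weights is the delicate balancing act. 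I expect the resolution to use a single prime $p$ and an exponent sequence built from the Apéry-type or gap data of $S$ truncated at $d$, with the verification that $d \in S$ is exactly what makes the truncation consistent. Once the weights are in hand, Proposition~\ref{Proposition:Module} delivers $A$ immediately, and taking $d = m(S) \in S \setminus\{0\}$ gives the stated bound $\mdim S \le m(S)$.
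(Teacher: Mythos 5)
There is a genuine gap, and it lies in the framework itself rather than in the unresolved choice of weights. You take $L$ to be a nilpotent shift on a $d$-dimensional space, so $L^d = 0$ and hence $A^n = 0 \in \M_d(\Z)$ for every $n \geq d$; thus $\S(A)$ automatically contains every integer $n \geq d$, no matter how the lattice $\MM$ (your weights $w_i$) is chosen. You justify this by asserting that ``once $d \in S$, cofiniteness of $S$ means $S$ must contain everything past some point,'' and later that for $n \geq d$ ``$n \in S$ automatically since $d \in S$.'' That is false: $d \in S$ only forces the multiples of $d$ into $S$, and in the intended application $d = m(S)$ is typically far smaller than $F(S)$. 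For instance $S = \semigroup{4,7}$ has $d = m(S) = 4$ but $5, 6 \notin S$, yet any $A$ similar to a nilpotent shift of size $4$ satisfies $A^5 = A^6 = 0 \in \M_4(\Z)$, so $\S(A) \neq S$ for every choice of weights. Your construction is essentially the weighted nilpotent Jordan block of \cite{NSRM1}, and this is exactly why that older method only achieves $d = F(S)+1$: nilpotency imposes no conditions for exponents $\geq d$, so it cannot see the gaps of $S$ above $d$. Note also that when $d = m(S)$ the range $1 \leq n \leq d-1$, where your divisibility analysis does apply, consists entirely of gaps, so the ``crux'' you identify (balancing the weights) is the easy half; the half your framework cannot handle at all is $n > d$ with $n \notin S$.

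The paper's proof avoids this by making $L$ invertible: take a prime $p$, $\alpha = p^{1/d}$, $\MM = \vecspan_{\Z}\{\alpha^s : s \in S\}$, and $L$ equal to multiplication by $\alpha$, so that $A$ is a weighted cyclic permutation matrix rather than a shift. An integral basis is $\alpha^{s_0}, \ldots, \alpha^{s_{d-1}}$, where $s_i$ is the least element of $S$ congruent to $i$ modulo $d$ (the Ap\'ery set of $S$ relative to $d$), and the hypothesis $d \in S$ enters exactly where your sketch needs it: it guarantees that any $n \notin S$ with $n \equiv i \pmod{d}$ satisfies $n < s_i$, whence $\alpha^n \cdot 1 \in \MM$ would force $\alpha^n$ to be a (positive) integer multiple of the strictly larger $\alpha^{s_i}$, a contradiction; membership is thereby tested for \emph{all} exponents, not just those below $d$. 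If you want to salvage your approach, you must abandon nilpotency and build this mod-$d$ cyclic structure into $L$; Proposition \ref{Proposition:Module} then applies verbatim.
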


\begin{proof}
    Let $p$ be a prime and $\alpha =p^{1/d}$.  For $0 \leq i \leq d-1$, let $s_i$ be the smallest element of $S$ such that $s_i\equiv i\pmod{d}$; in particular, $s_0 = 0$. Then $\beta=\alpha^{s_0},\alpha^{s_1},\ldots,\alpha^{s_{d-1}}$ is an integral basis for the $\Z$-submodule $\MM = \vecspan_{\Z} \{\alpha^s:s\in S\}$ in $\V = \vecspan_{\Q}\{\alpha^s:s\in S\}$.   Let $L \in \operatorname{End}_{\Z}(\MM)$ be multiplication by $\alpha$. We claim that $S = \{n \in \N: L^n \MM \subseteq \MM\}$. 
    \begin{enumerate}[leftmargin=*]
        \item If $n \in S$, then $\alpha^n \MM = \vecspan_{\Z}\{ \alpha^{n+s} :s\in S\} \subseteq \MM$ since $n+s\in S$.

        \item If $n \notin S$, then $n<s_i$, in which $n \equiv i\pmod{d}$. Therefore, $\alpha^n \MM \subseteq \MM$ implies that $\alpha^{s_i} > \alpha^n = \alpha^n 1 = c \alpha^{s_i} > 0$ for some $c\in \Z$, which is impossible. 
    \end{enumerate}  
    We see that $s_i + 1 \equiv i +1 \equiv s_{i+1} \pmod{d}$; hence, $\alpha \alpha^{s_i} = \alpha^{s_i + 1} = p^{\ell_i}\alpha^{s_{i+1}}$ for some $\ell_i \in \Z$.  Proposition \ref{Proposition:Module} ensures that $A = {}_{\beta}[L]_{\beta}\in \M_d(\Q)$ satisfies $\S(A) = S$.
\end{proof}

\begin{ex}
    Let $S=\semigroup{6,9,20}$, the so-called McNugget semigroup \cite{MR3885968, MR4291927, MR4261927}, and let $d=6$. Let $p=2$ be a prime and $\alpha=2^{1/d}$.
    For $0\leq i\leq 5$, let $s_i$ be the smallest element of $S$ such that $s_i\equiv i\pmod{d}$: $s_0,s_1,s_2,s_3,s_4,s_5=0,49,20,9,40,29$, respectively. Then
    \begin{equation*}
    \beta= \alpha^{s_0},\alpha^{s_1},\ldots,\alpha^{s_{5}} 
    = \alpha^0, \alpha^{49}, \alpha^{20}, \alpha^9, \alpha^{40}, \alpha^{29}
    % = (2^{1/6})^0,(2^{1/6})^{49},(2^{1/6})^{20},(2^{1/6})^{9},(2^{1/6})^{40},(2^{1/6})^{29}        
    \end{equation*}
    is an integral basis for the $\Z$-module $\MM = \vecspan_{\Z} \{\alpha^s:s\in S\}$. 
    Since
    \begin{align*}
        \alpha\alpha^{s_0} &=\alpha\alpha^0=\alpha^1=\alpha^{-48}\alpha^{49} = 2^{-8} \alpha^{49} = \tfrac{1}{256} \alpha^{s_1},\\
        \alpha\alpha^{s_1} &=\alpha\alpha^{49}=\alpha^{50}=\alpha^{30}\alpha^{20} = 2^5 \alpha^{20} = 32 \alpha^{s_2}, \\
        \alpha\alpha^{s_2} &=\alpha\alpha^{20}=\alpha^{21}=\alpha^{12}\alpha^{9} = 2^2 \alpha^9  = 4 \alpha^{s_3}, \\
        \alpha\alpha^{s_3} &=\alpha\alpha^{9}=\alpha^{10}=\alpha^{-30}\alpha^{40} = 2^{-5} \alpha^{40} = \tfrac{1}{32}\alpha^{s_4}, \\
        \alpha\alpha^{s_4} &=\alpha\alpha^{40}=\alpha^{41}=\alpha^{12}\alpha^{29} = 2^{2} \alpha^{29} = 4 \alpha^{s_5}, \\
        \alpha\alpha^{s_5} &=\alpha\alpha^{29}=\alpha^{30}=\alpha^{30}\alpha^{0} = 2^{5} \alpha^0 = 32\alpha^{s_0},        
    \end{align*}
    Theorem \ref{Theorem:NSRM2} ensures that $\S(A) = \semigroup{6,9,20}$ for
    \begin{equation*}
        A = {}_{\beta}[L]_{\beta}
        = 
        \begin{bmatrix}
        \0&\0&\0&\0&\0&32\\
        \frac{1}{256}&\0&\0&\0&\0&\0\\
        \0&32&\0&\0&\0&\0\\
        \0&\0&4&\0&\0&\0\\
        \0&\0&\0&\frac{1}{32}&\0&\0\\
        \0&\0&\0&\0&4&\0
        \end{bmatrix}\in \M_6(\Q).
    \end{equation*}
    This is the transpose of the weighted permutation matrix obtained in \cite[~Ex. 3.1]{NSRM2}.
\end{ex}

\begin{remark}
One can adapt the construction of \cite[Thm.~1.1]{NSRM2} to prove Theorem \ref{Theorem:NSRM2} as follows: replace the Ap\'ery set $\Ap(S, m(S))$ with $\Ap(S,d)$ and use \cite[Lem.~1.4]{Rosales} to obtain the Kunz inequalities \cite[(2.1)]{NSRM2} directly.  However, the module-theoretic approach is superior since it is more general and does not require the Kunz inequalities.
\end{remark}

An upper bound on the matricial dimension can sometimes be obtained by representing a numerical semigroup as an intersection of other numerical semigroups.  Recall that the best-known general upper bound on the matricial dimension is $\mdim S \leq m(S)$ \cite[Thm. ~1.1]{NSRM2}.  Our next theorem often improves upon this.

\begin{thm}\label{Theorem:Intersection}
Let $S_1,S_2,\ldots,S_k$ be numerical semigroups and let $S = \bigcap_{i=1}^k S_i$.  Suppose that $d_i \in S_i \setminus \{0\}$ for $i=1,2,\ldots,k$.  Then there exists an $A\in \M_{d_1+d_2+\cdots + d_k}(\Q)$ such that $\S(A) = S$.  In particular, $\mdim S \leq d_1 + d_2 + \cdots + d_k$.
\end{thm}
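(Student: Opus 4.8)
The plan is to combine $k$ representing matrices via block direct sum and then argue that the exponent semigroup of the sum is the intersection of the individual exponent semigroups. First, for each $i$, apply Theorem \ref{Theorem:NSRM2} with the chosen element $d_i \in S_i \setminus \{0\}$ to obtain a matrix $A_i \in \M_{d_i}(\Q)$ with $\S(A_i) = S_i$. Next, set $A = A_1 \oplus A_2 \oplus \cdots \oplus A_k$, a block-diagonal matrix of size $d_1 + d_2 + \cdots + d_k$. Since $A^n = A_1^n \oplus A_2^n \oplus \cdots \oplus A_k^n$, we have $A^n \in \M_{d_1+\cdots+d_k}(\Z)$ if and only if $A_i^n \in \M_{d_i}(\Z)$ for every $i$, that is, $n \in \S(A_i) = S_i$ for all $i$. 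Hence $\S(A) = \bigcap_{i=1}^k \S(A_i) = \bigcap_{i=1}^k S_i = S$. The bound $\mdim S \leq d_1 + \cdots + d_k$ then follows immediately from the definition of matricial dimension.

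Alternatively, and perhaps more in the spirit of Section \ref{Section:Module}, one can run this argument directly through Proposition \ref{Proposition:Module}. For each $i$ let $p_i$ be a prime (chosen distinct for aesthetic reasons, though not logically necessary), let $\alpha_i = p_i^{1/d_i}$, let $\MM_i = \vecspan_{\Z}\{\alpha_i^s : s \in S_i\}$ inside $\V_i = \vecspan_{\Q}\{\alpha_i^s : s \in S_i\}$, and let $L_i$ be multiplication by $\alpha_i$; the proof of Theorem \ref{Theorem:NSRM2} shows $S_i = \{n \in \N : L_i^n \MM_i \subseteq \MM_i\}$. Now take $\V = \V_1 \oplus \cdots \oplus \V_k$, $\MM = \MM_1 \oplus \cdots \oplus \MM_k$ (a $\Z$-module of rank $d_1 + \cdots + d_k$ in the $(d_1+\cdots+d_k)$-dimensional space $\V$), and $L = L_1 \oplus \cdots \oplus L_k$. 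Then $L^n \MM \subseteq \MM$ if and only if $L_i^n \MM_i \subseteq \MM_i$ for all $i$, so Proposition \ref{Proposition:Module} applied to the integral basis $\beta = \beta_1 \cup \cdots \cup \beta_k$ yields $A = {}_\beta[L]_\beta$ with $\S(A) = \bigcap_{i=1}^k S_i = S$.

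Honestly, there is no real obstacle here: the only thing to verify is the entirely routine fact that exponentiation and the integrality condition both respect block-diagonal (equivalently, direct-sum) decompositions, and that an intersection of numerical semigroups is again a numerical semigroup — which is immediate since finite complements are preserved under finite intersections (though this last point is not even needed for the statement as written, which does not claim $S$ is numerical). The genuine content of the theorem is that it is often far better than $\mdim S \leq m(S)$: one should follow the statement with a remark or example illustrating that when $S$ decomposes as an intersection of semigroups $S_i$ each admitting a small element $d_i$, the sum $\sum d_i$ can be dramatically smaller than $m(S)$, which presumably motivates the subsequent sections.
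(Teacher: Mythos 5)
Your proposal is correct and follows essentially the same route as the paper: apply Theorem \ref{Theorem:NSRM2} to each $S_i$ with the chosen $d_i$, form the direct sum $A = A_1 \oplus \cdots \oplus A_k$, and observe that $\S(A) = \bigcap_i \S(A_i) = S$. The module-theoretic variant you sketch is a fine repackaging of the same idea, not a genuinely different argument.
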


\begin{proof}
First, recall that the finite intersection of numerical semigroups is a numerical semigroup \cite[Exercise 2.2]{Rosales}.  For $i=1,2,\ldots,k$, Theorem \ref{Theorem:NSRM2} provides an $A_i \in \M_{d_i}(\Q)$ such that $\S(A_i) = S_i$.  Thus,
$S=\S(A)$ for $A = A_1 \oplus A_2 \oplus \cdots \oplus A_k \in \M_{d_1+d_2+\cdots + d_k}(\Q)$.
\end{proof}

If $d_1+d_2+\cdots+d_k<m(S)$, then Theorem \ref{Theorem:Intersection} improves upon \cite[Thm. ~1.1]{NSRM2}, which established $\mdim S \leq m(S)$.  This can actually occur, as the next example demonstrates.

\begin{ex}
Consider $S = \semigroup{30,31,36,40,45,46,51,55,65}$. Before this paper, the best we could say was $\mdim S \leq m(S) = 30$ \cite[Thm. ~1.1]{NSRM2}; this handily beats the previous bound $\mdim S \leq 120 = F(S) + 1$ from \cite[Thm.~6.2]{NSRM1}.  Since
    \begin{equation*}    
        S=  \semigroup{2,31} 
        \cap \semigroup{3,31}\cap \semigroup{5,31},
    \end{equation*}
    Theorem \ref{Theorem:Intersection} ensures that $\mdim S \leq 2+3+5 = 10$, a three-fold improvement.
\end{ex}

\begin{ex}
Let $p,q,r$ be distinct primes such that $r > pq$ and consider $S = \semigroup{p,r} \cap \semigroup{q,r}$.
Then $\mdim S \leq \mdim \semigroup{p,r} + \mdim \semigroup{q,r} = p+q$ by Theorem \ref{Theorem:Intersection}.  In relative terms, this estimate can be arbitrarily better than the upper bound $\mdim S \leq m(S)= pq$ provided by Theorem \ref{Theorem:NSRM2} since
$(p+q)/pq  = 1/p + 1/q$ tends to $0$ as $p,q \to \infty$.
\end{ex}

Parts (a) and (b) of the next theorem are contained in \cite[Prop.~2.2]{NSRM1} (the matrices involved are different, however). In contrast with the ad-hoc method of \cite{NSRM1}, the module-theoretic approach, based upon Proposition \ref{Proposition:Module}, is systematic.  Part (c) is novel and it is the most important result in our computation of matricial dimensions for semigroups of low complexity; see Theorem \ref{Theorem:m3} and Section \ref{Section:Frobenius}.

\begin{thm}\label{Theorem:LatticeExamples}
The following semigroups have matricial dimension $2$.
    \begin{enumerate}[leftmargin=*]
        \item Cyclic semigroups: $\semigroup{a}$ with $a \geq 2$.
        \item Ordinary semigroups: $S_b = \{b,b+1,b+2,\ldots\}$ with $b \geq 2$.
        \item The union of a cyclic and ordinary semigroup: $\semigroup{a} \cup S_b$ with $a,b\geq 2$.
    \end{enumerate}
\end{thm}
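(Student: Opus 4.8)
The plan is to establish $\mdim S\ge 2$ uniformly and then, for each family, to exhibit the data $(\V,\MM,L)$ needed to invoke Proposition \ref{Proposition:Module}. For the lower bound, observe that $\mdim S\le 1$ forces $S=\S([q])=\{n\in\N:q^n\in\Z\}$ for some $q\in\Q$, and this set is $\N$ when $q\in\Z$ and $\{0\}$ otherwise. Each semigroup in (a)--(c) omits $1$ (in (c) because $a,b\ge 2$) and contains a nonzero element, hence is neither $\N$ nor $\{0\}$; so it suffices to produce a $2\times2$ representing matrix in every case.

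For (a) and (b) I would work in $\V=\Q^2$ with standard basis $f_1,f_2$. For $\semigroup{a}$, take $L\colon f_1\mapsto f_1$, $f_2\mapsto f_1+f_2$ and $\MM=\Z f_1+\Z\tfrac{1}{a} f_2$; since $L^n f_2=nf_1+f_2$, Proposition \ref{Proposition:Module} gives $\S(A)=\{n:\tfrac{n}{a}\in\Z\}=\semigroup a$ for $A={}_\beta[L]_\beta=\tinymatrix{1}{1/a}{0}{1}$. For $S_b$, fix a prime $p>b-1$, let $L$ have matrix $\tinymatrix{p}{1}{0}{p}$ and set $\MM=\Z f_1+\Z p^{-(b-1)}f_2$; then $L^n$ has matrix $\tinymatrix{p^n}{np^{n-1}}{0}{p^n}$, and $L^n\MM\subseteq\MM$ amounts to $np^{n-b}\in\Z$, which (using $p\nmid n$ for $1\le n\le b-1$) holds exactly when $n=0$ or $n\ge b$. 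Thus $\S(A)=S_b$ for $A=\tinymatrix{p}{p^{-(b-1)}}{0}{p}$.

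For (c), write $T=\semigroup a\cup S_b$. First I would reduce: if $a\le 2$ or $b\le 2$ then $2\in T$ and Theorem \ref{Theorem:NSRM2} applies; if $a\ge b$ then $T=S_b$, and if $b=a+1$ then $T=S_a$, so part (b) covers these. The remaining case is $3\le a$ with $b\ge a+2$, and here the idea is to fuse the two mechanisms above inside a quadratic field. Pick a quadratic algebraic integer $\omega$ (a root of $x^2-Px+Q$, $P,Q\in\Z$) and an odd prime $p$; put $\theta=p\omega$, $\V=\Q(\omega)$, let $L$ be multiplication by $\theta$, and take $\MM=\Z\cdot1+\Z\cdot p^{\,b}\omega$. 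Using $\omega^n=-QU_{n-1}+U_n\omega$, where $U_n$ is the Lucas sequence of $x^2-Px+Q$, one checks that $\theta^n\MM\subseteq\MM$ if and only if $v_p(U_n)\ge b-n$; so Proposition \ref{Proposition:Module} yields $\S(A)=\{n:v_p(U_n)\ge b-n\}$, a set that automatically contains every $n\ge b$. It remains to choose $(\omega,p)$ so that, for $1\le n\le b-1$, $\;v_p(U_n)\ge b-n\iff a\mid n$; it is enough that $p$ have rank of apparition $a$ in $(U_n)$ (so $v_p(U_n)=0<b-n$ for $a\nmid n$) and that $v_p(U_a)\ge b-a$ (so $v_p(U_{ka})\ge v_p(U_a)\ge b-a\ge b-ka$ for each multiple $ka<b$, by the standard valuation identity $v_p(U_{ka})=v_p(U_a)+v_p(k)$).

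The crux -- and the step I expect to be hardest -- is the existence of such $(\omega,p)$. When $a\in\{3,4,6\}$ it is painless: taking $\omega=\zeta_6$, $\omega=1+i$, or $\omega=\tfrac{1}{2}(3+\sqrt{-3})$ gives $U_a=0$, hence $U_n=0\iff a\mid n$, and then any odd prime $p$ not dividing the finite product $\prod_{1\le n\le b-1,\,a\nmid n}U_n$ works. For general $a\ge 5$ no quadratic $\omega$ satisfies $U_a=0$ (the ratio of the roots would be a primitive $a$-th root of unity, impossible by Niven's theorem), so I would instead set $Q=1$ and choose $P$ and $p$ by Hensel lifting: the ``primitive part'' $\Psi_a$ of $U_a(P,1)=\prod_{j=1}^{a-1}\bigl(P-2\cos(\pi j/a)\bigr)$ (the factor corresponding to the roots with $\gcd(j,a)=1$) has simple roots over $\overline{\Q}$, and for any prime $p\equiv 1\pmod{2a}$ avoiding the finitely many divisors of $\operatorname{disc}\Psi_a$ and of the resultants $\operatorname{Res}(\Psi_a,U_d)$ with $d\mid a$, $d<a$, such a root persists modulo $p$ and lifts to an integer $P$ with $p^{\,b-a}\mid U_a(P,1)$ and rank of apparition $a$. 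Carrying out this lifting cleanly (and confirming that $\Psi_a$ is genuinely coprime to the lower $U_d$) is the substantive part; everything else is routine verification.
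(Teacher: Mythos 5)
Your lower-bound argument and parts (a), (b) are correct and complete; your matrices (a unipotent $\tinymatrix{1}{1/a}{0}{1}$ for $\semigroup{a}$ and $\tinymatrix{p}{p^{-(b-1)}}{0}{p}$ for $S_b$) differ from the paper's but fit the same module-theoretic framework of Proposition \ref{Proposition:Module} and verify immediately. The reductions at the start of (c) (to $a\ge 3$, $b\ge a+2$) are also fine and not even needed in the paper's treatment.

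The problem is the remaining case of (c), which is the heart of the theorem: your proof is not complete as written, and you say so yourself. Everything hinges on producing, for arbitrary $a\ge 5$, a pair $(P,p)$ with rank of apparition exactly $a$ and $v_p(U_a)\ge b-a$; the primitive-part/Hensel/resultant sketch is plausible (and I believe completable, via splitting of primes $p\equiv 1\pmod{2a}$ in $\Q(\zeta_{2a})$ plus the standard Lucas divisibility facts), but as submitted it is an unproven existence claim, plus several unaddressed degeneracies ($P=\pm2$, $p\nmid\operatorname{disc}$, coprimality of $\Psi_a$ with the lower $U_d$). This difficulty is entirely self-inflicted: you moved to a quadratic field to manufacture ``order $a$ modulo a high power of $p$,'' but that phenomenon already lives in $\Q$. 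The paper takes $p\equiv 1\pmod a$ and any $z$ with $\ord_{p^b}z=a$ --- which exists trivially because $(\Z/p^b\Z)^\times$ is cyclic of order $p^{b-1}(p-1)$, divisible by $a$ --- and conjugates $L=\operatorname{diag}(zp,\,p)$ by $B=\tinymatrix{1}{1/p^b}{0}{1/p^b}$ to get $A=\tinymatrix{pz}{p^{1-b}(z-1)}{0}{p}$, whose $n$th power is integral iff $p^{n-b}(z^n-1)\in\Z$, i.e.\ iff $n\ge b$ or $z^n\equiv 1\pmod{p^b}$, i.e.\ iff $n\ge b$ or $a\mid n$. Your condition $v_p(U_n)\ge b-n$ is exactly the quadratic-field analogue of this, but obtained at the cost of the hard existence step; replacing your $(\omega,p)$ construction with a rational pair of eigenvalues $(pz,p)$ closes the gap and collapses (c) to a two-line verification.
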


\begin{proof}
We sketch the constructions in (a) and (b) while presenting (c) in full detail.  This is no loss, since (c) is more complex and involves aspects of both previous cases.  It is more convenient here to work with matrices instead of transformations.  In the terminology of Proposition \ref{Proposition:Module}, $\V = \Q^2$, $\beta = \{\vec{b}_1, \vec{b}_2\}$, in which $B = [\vec{b}_1~\vec{b}_2] \in \M_2(\Q)$, and $\MM = B\Z^2$.

\medskip\noindent(a) Fix $a \geq 2$, let $p \equiv 1 \pmod{a}$ be a prime, and let $\ord_p z = a$.
    Let $B = \tinymatrix{1}{1/p}{\0}{1/p}$ and $L = \tinymatrix{z}{\0}{\0}{1}$. Then $A = B^{-1}LB = \tinymatrix{z}{(z-1)/p}{\0}{1}$, so  $A^n = \tinymatrix{z^n}{(z^n-1)/p}{\0}{1}$ and $\S(A) = \semigroup{a}$.

\medskip\noindent(b) Fix $m\geq 1$, let $p$ be a prime, let $B = \tinymatrix{1}{1/p^b}{\0}{1/p^b}$, and let $L = \tinymatrix{p^2}{\0}{\0}{p}$.  Then $A = B^{-1}LB= \tinymatrix{p^2}{(p-1)p^{1-m}}{\0}{p}$, so  $A^n = \tinymatrix{p^{2n}}{p^{n-b}(p^n-1)}{\0}{p^n}$ and $\S(A) = S_b$.

\medskip\noindent(c)     Fix $a,b \geq 2$, let $p \equiv 1 \pmod{a}$ be prime, and let $\ord_{p^b} z = a$.  Let
    $B = [ \vec{b}_1~\vec{b}_2] = \tinymatrix{1}{1/p^b}{\0}{1/p^b}$ and $L = \tinymatrix{zp}{\0}{\0}{p}$.  Then $A = B^{-1}LB = \tinymatrix{pz}{p^{1-b}(z-1)}{\0}{p}$, so 
    \begin{equation*}
    A^n= \begin{bmatrix} p^n z^n & p^{n-b}(z^n-1) \\ \0 & p^n \end{bmatrix}.    
    \end{equation*}
    We claim that $\S(A) = \semigroup{a} \cup S_b$.  Since $\vec{b}_1 \in \Z^2$, we only need to determine when 
    \begin{equation*}
     L^n \vec{b}_2 = \begin{bmatrix} p^{n-b}z^n \\ p^{n-b} \end{bmatrix} \in \MM = \vecspan_{\Z}\{ \vec{b}_1 , \vec{b}_2\} = B\Z^2.   
    \end{equation*}
    This occurs whenever there exists an $\vec{x}=\begin{bmatrix} x_1 \\ x_2 \end{bmatrix} \in \Z^2$ such that $L^n \vec{b}_2 = x_1 \vec{b}_1 + x_2 \vec{b}_2$.  Solve the corresponding system
    $p^{n-b} z^n = x_1 + p^{-b} x_2$ and $p^{n-b} = 0 x_1 + p^{-b} x_2$,
    and get $x_2 = p^n$, so that $x_1 = p^{n-b}(z^n - 1)$, which is an integer if and only if
    $n \geq b$ or $z^n \equiv 1 \pmod{p^b}$.  The second condition is equivalent to $a \mid n$.
\end{proof}

%%%%%%%%%%%%%%%%%%%%%%%%%%

Prior to this paper, even relatively simple numerical semigroups
such as $\semigroup{4,6,7,9}$ and $\semigroup{5,7,8,9,11}$ were beyond our ability to handle.  The next two examples use Theorem \ref{Theorem:LatticeExamples}(c) to compute the matricial dimension of these previously intractable semigroups.

\begin{ex}\label{Ex:4679}
    The matricial dimension of $S= \semigroup{4} \cup S_6 = \semigroup{4,6,7,9}$ is $2$. Let $a=4$ and $b=6$ in the proof of Theorem \ref{Theorem:LatticeExamples}(c). Select $p=5 \equiv 1 \pmod{4}$ and observe that $\ord_{5^6} z = 4$, in which $z = 14557$. Then $\S(A)=S$ for $A = \tinymatrix{pz}{p^{1-b}(z-1)}{\0}{p}=\tinymatrix{72785}{\frac{14556}{3125}}{\0}{5}$.
\end{ex}

\begin{ex}\label{Ex:578911}
    The matricial dimension of $S = \semigroup{5} \cup S_7 = \semigroup{5,7,8,9,11}$ is $2$. Let $a=5$ and $b=7$ in the proof of Theorem \ref{Theorem:LatticeExamples}(c). Select $p=11 \equiv 1 \pmod{5}$ and observe that $\ord_{11^7} 10{,}250{,}663 = 5$.  Then $\S(A) = S$ for $A=\tinymatrix{112{,}757{,}293}{\frac{10{,}250{,}662}{1{,}771{,}561}}{\0}{11}$.
\end{ex}

%%%%%%%%%%%%%%%%%%%%%%%%%%%%%%%%%%%%%%%%%%%%%%%%%%%%%%%%    
\section{Numerical semigroups of low multiplicity}\label{Section:Multiplicity}

In this section, we compute the matricial dimension of numerical semigroups of multiplicity at most $3$.  If $m(S) = 1$, then $S = \N$ and hence $\mdim S = 1$ \cite[Prop.~2.1]{NSRM1}.  If $m(S) = 2$, then $S = \semigroup{2}$ or $S = \semigroup{2,k}$ with $k\geq 3$ odd; in both cases, the semigroups have matricial dimension $2$ (Theorem \ref{Theorem:LatticeExamples}).

For $m(S) = 3$, we require new ideas.  A nonzero element of a numerical semigroup $S$ is \emph{small} if it is less than the Frobenius number, $F(S)$.  The set of nonzero small elements is 
\begin{equation*}
\s(S) = \{ n \in S \setminus \{0\} : n < F(S)\}.
\end{equation*}
The following lemma is \cite[Thm.~7.2]{NSRM3}.

\begin{lemma}\label{Lemma:Small}
    If $S$ is dimension-$2$ realizable, then $\s(S) = \varnothing$ or $\gcd\s(S) \geq 2$.
\end{lemma}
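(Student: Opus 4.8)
The plan is to convert dimension‑$2$ realizability into a statement about a binary integer linear recurrence, and then read off the structure of $\s(S)$ from the $\ell$‑adic behavior of that recurrence at a single well‑chosen prime.

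First I would record a normal form. Suppose $S = \S(A)$ with $A \in \M_2(\Q)$, and write $t = \operatorname{tr} A$, $\delta = \det A$, with eigenvalues $\lambda,\mu$. Since $S$ is infinite, $A^n \in \M_2(\Z)$ for some $n \geq 1$, so $\lambda^n,\mu^n$ are algebraic integers; being integral over the ring of integers $\mathcal{O}_F$ of $F = \Q(\lambda,\mu)$ (a field of degree at most $2$), both $\lambda$ and $\mu$ lie in $\mathcal{O}_F$, whence $t = \lambda+\mu$ and $\delta = \lambda\mu$ are rational integers. By Cayley--Hamilton, $A^n = u_nA - \delta u_{n-1}I$ for $n \geq 1$, where $u_0 = 0$, $u_1 = 1$, and $u_{n+1} = tu_n - \delta u_{n-1}$ is an \emph{integer} sequence; since $\delta u_{n-1}I \in \M_2(\Z)$, we have $A^n \in \M_2(\Z)$ if and only if $u_nA \in \M_2(\Z)$, that is, if and only if $N \mid u_n$, where $N$ is the least common multiple of the denominators of the entries of $A$. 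Hence
\begin{equation*}
S = \{0\} \cup \{\, n \geq 1 : N \mid u_n \,\},
\end{equation*}
and we may assume $N \geq 2$ (otherwise $A \in \M_2(\Z)$, $S = \N$, and $\s(S) = \varnothing$). A short argument then shows that every prime $\ell \mid N$ divides $\gcd(t,\delta)$: cofiniteness of $S$ forces $\ell \mid u_n$ for all large $n$; if $\ell \nmid \delta$ the recurrence is invertible modulo $\ell$, so $(u_n \bmod \ell)$ is purely periodic, forcing the absurdity $\ell \mid u_1 = 1$; and if $\ell \mid \delta$ but $\ell \nmid t$, then $u_n \equiv t^{\,n-1} \pmod{\ell}$ is a unit for every $n \geq 1$, again impossible.

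Now I would localize. Since $\N \setminus S = \bigcup_{\ell \mid N}\{\, n \geq 1 : v_\ell(u_n) < v_\ell(N)\,\}$, choose a prime $\ell$ for which $B := \{\, n \geq 1 : v_\ell(u_n) < e\,\}$, $e := v_\ell(N)$, attains $F(S) = \max B$; then $\{n > F(S)\} \subseteq T := \{\, n \geq 1 : v_\ell(u_n) \geq e\,\}$ and $\s(S) \subseteq T \cap (0,F(S))$. Put $a = v_\ell(t) \geq 1$, $b = v_\ell(\delta) \geq 1$, and inspect the $\ell$‑adic eigenvalues $\lambda,\mu$. If $2a < b$ (including $\delta = 0$), the eigenvalues have distinct $\ell$‑adic valuations and the recurrence gives $v_\ell(u_n) = (n-1)a$ exactly; hence $T = \{n \geq n_0\}$ is a tail, $F(S) = n_0 - 1$, and $\s(S) \subseteq T \cap (0,F(S)) = \varnothing$. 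If $2a \geq b$, the eigenvalues have equal valuation $b/2 > 0$; writing $\zeta = \lambda/\mu$ (a unit) and letting $d$ be the order of $\bar\zeta$ in the residue field, a lifting‑the‑exponent evaluation of
\begin{equation*}
v_\ell(u_n) = (n-1)\tfrac{b}{2} - v_\ell(\zeta - 1) + v_\ell(\zeta^n - 1)
\end{equation*}
shows that $v_\ell(u_n)$ is a strictly increasing affine function of $n$ along the indices with $d \nmid n$ and is strictly larger along those with $d \mid n$; consequently there is a threshold $\tau$ with $\{n \geq \tau\}\subseteq T$ (so $F(S) < \tau$) and $T \cap (0,\tau) \subseteq d\Z$. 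When $d = 1$ (that is $\bar\zeta = 1$, which also covers the repeated‑eigenvalue case $\lambda = \mu$, where $u_n = n\lambda^{n-1}$), the analogous computation yields $v_\ell(u_n) = (n-1)\tfrac{b}{2} + v_\ell(n)$ up to a bounded error, and $T$ is instead confined below its threshold to $\ell\Z$. In every branch $\s(S) \subseteq T \cap (0,F(S))$ sits inside a fixed proper subgroup $r\Z$ with $r \geq 2$; hence $\gcd\s(S) \geq 2$ unless $\s(S) = \varnothing$.

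The step I expect to be hardest is the lifting‑the‑exponent bookkeeping in the equal‑valuation case: one must track $v_\ell(\zeta^n - 1)$ across a (possibly ramified) quadratic extension when $b$ is odd, treat $\ell = 2$ by hand, and fold in the degenerate case $\lambda = \mu$. None of this is deep, and only one qualitative fact is actually used — below the first index at which the ``generic'' residues enter $T$, the whole of $T$ lies in a proper subgroup $r\Z$ with $r \ge 2$, while $F(S)$ lies below that index — so the conclusion is robust even though the exact valuations are fiddly.
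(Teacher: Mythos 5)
The paper itself does not prove this lemma -- it is imported verbatim from \cite[Thm.~7.2]{NSRM3} -- so there is no in-paper argument to compare against; judged on its own, your proof is correct and gives a genuinely self-contained route. The reduction is sound: integrality of $t=\operatorname{tr}A$ and $\delta=\det A$, the integer Lucas-type sequence $u_{n+1}=tu_n-\delta u_{n-1}$ with $S\setminus\{0\}=\{n\geq 1: N\mid u_n\}$, the observation that cofiniteness forces every prime $\ell\mid N$ to divide both $t$ and $\delta$ (this is also precisely where the hypothesis that $S$ is numerical enters, consistent with the lemma's implicit use of $F(S)$), the choice of $\ell$ with $F(S)=\max B$, and the Newton-polygon trichotomy on $v_\ell(\lambda),v_\ell(\mu)$ all check out. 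Two tightenings are worth making. First, in the equal-valuation case you do not need lifting-the-exponent at all, only a unit dichotomy: for $d\geq 2$ one has $v_\ell(\zeta^n-1)=0$ exactly when $d\nmid n$, and for $d=1$ (which also covers $\lambda=\mu$, $\ell=2$, and ramified quadratic extensions) one has $v_\ell(\zeta^n-1)=v_\ell(\zeta-1)$ exactly when $\ell\nmid n$, since $1+\zeta+\cdots+\zeta^{n-1}\equiv n$ is then a unit; so the ``fiddly'' cases you flag simply do not arise. Second, your phrase ``up to a bounded error'' in the $d=1$ branch is strictly weaker than what the threshold argument requires (an unquantified bounded error could push generic indices across the cutoff $e$); it should be replaced by the exact equality just stated, after which $v_\ell(u_n)=(n-1)\tfrac{b}{2}$ for $\ell\nmid n$, $v_\ell(u_n)\geq(n-1)\tfrac{b}{2}$ always, and the conclusions $T\cap(0,\tau)\subseteq r\Z$ with $r\in\{d,\ell\}$, $F(S)<\tau$, and hence $\s(S)=\varnothing$ or $\gcd\s(S)\geq 2$ follow exactly as you argue.
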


Arsh Chhabra (personal communication) noted the following useful extension, which will be useful at a certain point in Section \ref{Section:Frobenius}, although it is tangential to our present aim.

\begin{thm}\label{Theorem:Quotient}
    Let $S$ be dimension-$d$ realizable and $g = \gcd \s(S) \geq 2$.
    Then $S/g = \{ n \in \N : gn \in S\}$ is dimension-$d$ realizable.
\end{thm}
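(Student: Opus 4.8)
The plan is to manufacture a representing matrix for $S/g$ directly from one for $S$ by passing to a power. Since $S$ is dimension-$d$ realizable, I would first fix $A \in \M_d(\Q)$ with $\S(A) = S$ and then simply set $B = A^g \in \M_d(\Q)$. The claim to prove is that $\S(B) = S/g$, and everything else follows at once.

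To verify the claim, observe that for every $n \in \N$ one has $B^n = (A^g)^n = A^{gn}$, so
\begin{equation*}
n \in \S(B) \iff A^{gn} \in \M_d(\Z) \iff gn \in \S(A) = S \iff n \in S/g;
\end{equation*}
here the middle equivalence is the definition of $\S(A)$ and the outer ones are the definitions of $\S(B)$ and of $S/g$. Thus $S/g = \S(B)$ with $B \in \M_d(\Q)$, which is exactly the assertion that $S/g$ is dimension-$d$ realizable. It is also worth recording, although it is not needed for the statement, that $S/g$ is genuinely a numerical semigroup: $S$ contains the cofinite tail $\{F(S)+1, F(S)+2, \ldots\}$, so $gn \in S$ for all sufficiently large $n$, whence $\N \setminus (S/g)$ is finite.

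The main --- indeed the only --- insight required is the choice $B = A^g$; after that the proof is the one-line chain of equivalences above, so I do not anticipate any real obstacle. I would point out, however, that the argument uses neither $g = \gcd\s(S)$ nor $g \geq 2$: it is valid verbatim for any positive integer $g$, with $g = 1$ giving the vacuous identity $S/1 = S$. These hypotheses appear in the statement because the case $g = \gcd\s(S) \geq 2$ is the one that is useful in Section \ref{Section:Frobenius}, where the theorem is applied iteratively (each successive quotient again has a gcd of small elements available to divide by) and in tandem with Lemma \ref{Lemma:Small}, yielding a necessary condition for dimension-$2$ realizability strictly stronger than Lemma \ref{Lemma:Small} alone.
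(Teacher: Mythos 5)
Your proposal is correct and is essentially identical to the paper's proof: both take $B = A^g$ and observe $\S(B) = \{n \in \N : A^{gn} \in \M_d(\Z)\} = \{n \in \N : gn \in S\} = S/g$. Your observation that the hypotheses $g = \gcd\s(S) \geq 2$ are not used in the argument is also accurate; they reflect the intended application alongside Lemma \ref{Lemma:Small}.
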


\begin{proof}
    If $S = \S(A)$ for some $A \in \M_d(\Q)$, let $B = A^g$ and observe that $\S(B) = \{ n \in \N : B^n \in \M_d(\Z)\} = \{n \in \N : A^{gn} \in \M_d(\Z)\} = \{ n \in \N : gn \in S\} = S/g$.
\end{proof}

We first need the following rephrasing of \cite[Thm.~5.1]{NSRM1} in terms of matricial dimension.  It is the primary method for establishing lower bounds on the matricial dimension.

\begin{lemma}\label{Lemma:Consecutive}
    If $S$ is a numerical semigroup such that $n,n+1,n+2,\ldots,n+r-1 \in S$ and $n+r \notin S$,
    then $\mdim S > r$.
\end{lemma}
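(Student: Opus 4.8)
The plan is to prove the contrapositive-flavored statement directly: assuming $S = \S(A)$ for some $A \in \M_d(\Q)$ with $d \leq r$, derive a contradiction from the hypothesis that $S$ contains the $r$ consecutive integers $n, n+1, \ldots, n+r-1$ but not $n+r$. The key observation is that if $n, n+1, \ldots, n+r-1 \in \S(A)$, then $A^n, A^{n+1}, \ldots, A^{n+r-1}$ are all integer matrices, hence so is every $\Z$-linear combination of them. In particular, for any polynomial $q(x) \in \Z[x]$ of degree at most $r-1$, the matrix $A^n q(A) \in \M_d(\Z)$.

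The main step is to exploit the Cayley–Hamilton theorem. Since $A$ is $d \times d$ with $d \leq r$, its characteristic polynomial $\chi_A(x) = x^d - c_{d-1}x^{d-1} - \cdots - c_0$ has degree $d \leq r$; however, the coefficients $c_i$ need not be integers, so a little care is needed. The idea is to write $A^{n+r}$ in terms of lower powers. Using $\chi_A(A) = 0$, we get $A^d = c_{d-1}A^{d-1} + \cdots + c_0 I$, and iterating (or equivalently dividing $x^{r}$ by $\chi_A(x)$) yields $A^{r} = \sum_{j=0}^{d-1} e_j A^{j}$ for suitable rationals $e_j$. Multiplying by $A^n$ gives $A^{n+r} = \sum_{j=0}^{d-1} e_j A^{n+j}$. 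Since $d \leq r$, every exponent $n+j$ on the right lies in $\{n, n+1, \ldots, n+r-1\}$, so each $A^{n+j}$ is an integer matrix. The difficulty — and the crux of the argument — is that this alone does not force $A^{n+r}$ to be integral, because the $e_j$ are merely rational; one must show that the $e_j$ can in fact be taken in $\Z$, or otherwise argue integrality of the combination.

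To resolve this, I would invoke the fact that $A$ has a characteristic polynomial with integer coefficients in the relevant sense, or better, appeal directly to the structure already used in \cite{NSRM1}: since $n \in \S(A)$, the matrix $A^n$ is an integer matrix, and the minimal polynomial of $A$ over $\Q$ divides a monic integer polynomial only if $A^n$ "absorbs denominators" appropriately. The cleanest route is: because $A^n, A^{n+1}, \ldots, A^{n+d-1}$ are integer matrices and $d \leq r$, and because $A \cdot A^{n+d-1} = A^{n+d}$ must be expressible via Cayley–Hamilton applied after multiplying through, one shows inductively that $A^{n+r}$ lies in the $\Z$-span of $\{A^n, \ldots, A^{n+d-1}\}$. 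Concretely, the minimal polynomial $\mu_A(x) \in \Q[x]$ is monic; clearing denominators, there is a monic-up-to-leading-content relation, and multiplying the relation $\mu_A(A)=0$ by $A^n$ and by the content shows the needed combination has integer coefficients. Hence $A^{n+r} \in \M_d(\Z)$, i.e. $n+r \in \S(A) = S$, contradicting $n+r \notin S$. Therefore $d > r$, which is precisely $\mdim S > r$. I expect the integrality bookkeeping around Cayley–Hamilton — ensuring the linear relation expressing $A^{n+r}$ has $\Z$ (not merely $\Q$) coefficients — to be the one genuinely delicate point; everything else is routine.
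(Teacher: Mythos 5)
Your overall strategy is the right one, and it is in fact the argument behind the result this lemma is quoted from (the paper gives no proof of its own; it cites \cite[Thm.~5.1]{NSRM1}, whose proof is exactly the Cayley--Hamilton computation you outline). However, your write-up has a genuine gap at precisely the point you flag as delicate, and the fallback you sketch does not close it. The needed fact is that the characteristic polynomial of a representing matrix actually lies in $\Z[x]$: if $\S(A)\neq\{0\}$, then $p_A(x)\in\Z[x]$ (this is \cite[Thm.~3.3(a)]{NSRM1}, quoted verbatim in Section~\ref{Section:Companion} of this paper; it follows because the eigenvalues of $A$ are $m$-th roots of eigenvalues of the integer matrix $A^m$ for some $m\geq 1$, hence algebraic integers, and a monic rational polynomial with algebraic-integer roots has integer coefficients). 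Your proposed alternative --- take the minimal polynomial $\mu_A\in\Q[x]$, ``clear denominators,'' and multiply the relation $\mu_A(A)=0$ by $A^n$ and by the content --- fails: clearing denominators replaces the monic $\mu_A$ by $N\mu_A$ with integer coefficients but leading coefficient $N$ possibly greater than $1$, and the resulting relation only shows that $N\,A^{n+r}\in\M_d(\Z)$, which does not imply $A^{n+r}\in\M_d(\Z)$. So as written, the integrality of the coefficients $e_j$ is asserted rather than proved.

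Once the integral characteristic polynomial is in hand, the rest of your argument is correct and complete: writing $p_A(x)=x^d+c_{d-1}x^{d-1}+\cdots+c_0$ with $c_j\in\Z$ and $d\leq r$, Cayley--Hamilton gives $A^{n+r}=-\sum_{j=0}^{d-1}c_jA^{n+r-d+j}$, and since $d\leq r$ every exponent $n+r-d+j$ lies in $\{n,n+1,\ldots,n+r-1\}\subseteq S=\S(A)$, so each summand and hence $A^{n+r}$ is an integer matrix, forcing $n+r\in S$, a contradiction. (Equivalently, divide $x^{n+r}$ by the monic integer polynomial $p_A$ over $\Z[x]$, which makes your coefficients $e_j$ integers.) The fix, then, is simply to cite or prove $p_A\in\Z[x]$ rather than attempt the content-clearing route.
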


Here is the main theorem of this section.

    \begin{thm}\label{Theorem:m3}
    Let $S$ be a numerical semigroup with $m(S) = 3$ and $\Ap(S)=\{0,a_1,a_2\}$,
    in which $a_i \equiv i \pmod{3}$ for $i=1,2$. 
% Let $a,b \geq 4$ be distinct with $\gcd(3,a,b)=1$.
    \begin{enumerate}[leftmargin=*]
        \item If $|a_1-a_2|\leq 2$, then $\mdim S=2$.
        \item If $|a_1-a_2| > 2$, then $\mdim S=3$.
    \end{enumerate}
\end{thm}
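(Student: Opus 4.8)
The plan is to split the theorem into its two parts and attack them separately, using the upper-bound machinery of Section~\ref{Section:Module} for part~(a) and the lower-bound criterion (Lemma~\ref{Lemma:Consecutive}, together with the explicit ceiling $\mdim S \le m(S) = 3$) for part~(b). Throughout, I would exploit the fact that a multiplicity-$3$ numerical semigroup is completely determined by its Ap\'ery set $\Ap(S) = \Ap(S,3) = \{0,a_1,a_2\}$ with $a_i \equiv i \pmod 3$, so that $S = \{0\} \cup (a_1 + 3\N) \cup (a_2 + 3\N) \cup \{3k : k \ge 1\}$, and in particular $F(S) = \max\{a_1,a_2\} - 3$.

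For part~(a), the hypothesis $|a_1 - a_2| \le 2$ forces $\{a_1, a_2\}$ to be one of the pairs $\{3,5\}$, $\{4,5\}$, $\{3,4\}$ is impossible mod $3$, so after ruling out the degenerate cases the only genuine possibilities are $(a_1,a_2) \in \{(4,5),(3,5)\}$ up to the constraint $a_i \equiv i \pmod 3$; more generally the constraint $a_i \equiv i \pmod 3$ with $|a_1-a_2| \le 2$ pins down $a_1 = a_2 \pm 2$ or there is essentially one family. I would then observe that each such $S$ is of the form $\semigroup{3} \cup S_b$ for an appropriate $b$ (concretely, when $a_2 = a_1 + 2$ one checks $S = \semigroup{3} \cup \{a_1, a_1+1, a_1+2, \ldots\}$, i.e. $b = a_1$), OR is an ordinary semigroup $S_b$, OR equals $\semigroup{3}$ itself. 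In every case Theorem~\ref{Theorem:LatticeExamples} (parts (a), (b), or especially (c)) delivers $\mdim S \le 2$; and $\mdim S \ge 2$ because $S \ne \N$ (a one-dimensional representing matrix forces $S = \S(A) = \N$ for $A \in \M_1(\Q)$ only when $A \in \Z$, and otherwise $\S(A)$ is not all of $\N$ unless $S=\semigroup{1}$). Hence $\mdim S = 2$. The bookkeeping here — enumerating which pairs $(a_1,a_2)$ satisfy the congruence and gap constraints, and matching each resulting $S$ to the correct instance of Theorem~\ref{Theorem:LatticeExamples}(c) — is routine but must be done carefully.

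For part~(b), assume $|a_1 - a_2| > 2$ and write $M = \max\{a_1,a_2\}$, $m' = \min\{a_1,a_2\}$, so $M \ge m' + 3$ (the gap is a multiple of... no — $a_1 \equiv 1$, $a_2 \equiv 2$, so $a_1 - a_2 \not\equiv 0$, and $|a_1-a_2|>2$ means $|a_1-a_2| \ge 4$). The key point is that $F(S) = M - 3$ and that $m' \in S$ with $m', m'+3, m'+6,\ldots$ all in $S$ but the run of consecutive integers available below $F(S)$ is short: more precisely I want to locate three consecutive integers $n, n+1, n+2 \in S$ with $n+3 \notin S$, which would give $\mdim S > 2$ via Lemma~\ref{Lemma:Consecutive}, hence $\mdim S = 3$ since $\mdim S \le m(S) = 3$ by Theorem~\ref{Theorem:NSRM2}. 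To find such a run, note that once we pass $F(S)$ everything is in $S$; just below, the elements of $S$ near the top gap are governed by which residues have been ``switched on''. A cleaner route is via Lemma~\ref{Lemma:Small}: the small elements $\s(S)$ include $m'$ (since $m' < M - 3 = F(S)$ when $M \ge m'+4$, which holds as the gap is $\ge 4$) and also $3 \in \s(S)$ provided $3 < F(S)$; then $\gcd\s(S) \mid \gcd(3, m') = \gcd(3,m')$, which equals $1$ if $3 \nmid m'$, and since $m' \in \{a_1,a_2\}$ has $m' \equiv 1$ or $2 \pmod 3$, indeed $3 \nmid m'$, so $\gcd \s(S) = 1$; Lemma~\ref{Lemma:Small} then rules out dimension-$2$ realizability, giving $\mdim S \ge 3$, and combined with $\mdim S \le 3$ we conclude $\mdim S = 3$. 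The main obstacle is handling the edge cases where $F(S)$ is small enough that $3 \notin \s(S)$ or $m' \notin \s(S)$: these correspond to only finitely many semigroups $S$, which I would simply check by hand to confirm they still have a forbidden three-term run or otherwise fail the Lemma~\ref{Lemma:Small} test, thereby completing the argument.
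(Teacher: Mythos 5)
Your proposal is correct and follows essentially the same route as the paper: the upper bound $\mdim S \le 3$ from Theorem \ref{Theorem:NSRM2}, part (a) via the identification $S = \semigroup{3} \cup S_{\min\{a_1,a_2\}}$ together with Theorem \ref{Theorem:LatticeExamples}, and part (b) via $\{3,\min\{a_1,a_2\}\} \subseteq \s(S)$, $\gcd\s(S)=1$, and Lemma \ref{Lemma:Small}. The only blemishes are cosmetic: in (a) there are infinitely many admissible pairs $(a_1,a_2)$ (e.g.\ $(7,8)$, $(10,8)$), not a short finite list, though your general identification covers them all; and in (b) the feared edge cases never occur, since $\min\{a_1,a_2\}\ge 4$ and $F(S)=\max\{a_1,a_2\}-3\ge \min\{a_1,a_2\}+1>3$, so both $3$ and $\min\{a_1,a_2\}$ are always small elements.
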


\begin{proof}
Since $m(S) = 3$, Theorem \ref{Theorem:NSRM2} ensures that $\mdim S \leq 3$.

\medskip\noindent(a) If $|a_1 - a_2| \leq 2$, then $S = \semigroup{3} \cup \S_{\min\{a_1,a_2\}}$, so
$\mdim S = 2$ by Theorem \ref{Theorem:LatticeExamples}.

\medskip\noindent(b) If $|a_1 - a_2| > 2$, then $\{3,\min\{a_1,a_2\}\}\subseteq\s(S)$ and $\gcd\s(S)=1$. 
Lemma \ref{Lemma:Small} ensures that $S$ is not dimension-$2$ realizable, so $\mdim S=3$.
\end{proof}

%%%%%%%%%%%%%%%%%%%%%%%%%%%%%%%%%%%%%%%%%

%%%%%%%%%%%%%%%%%%%%%%%%%%%%%%%%%%%%%%%%%%%%%%%%
\section{The companion-matrix method}\label{Section:Companion}

This paper culminates in the computation of the matricial dimensions of all numerical semigroups $S$ with $F(S) \leq 10$ or $g(S) \leq 6$.  This would be impossible without the introduction of novel methods, since many of these semigroups were beyond the means of \cite{NSRM1, NSRM2, NSRM3}.  We explain here a powerful new method for finding representing matrices of size smaller than $m(S)$.  Since there is a bit of intuition and a lot of linear programming involved, it is difficult to state our approach as a formal theorem.  Consequently, we describe the technique as we have employed it so that others can apply the method.

The method illustrated in the following examples is called the \emph{companion-matrix method} (CMM).
It relies on the fact that the characteristic polynomial $p_A(x)$ of any $A \in \M_d(\Q)$ with $\S(A) \neq \{0\}$
has integer coefficients \cite[Thm.~3.3(a)]{NSRM1}.  Given a semigroup $S$, one can often make an educated guess about the form of the characteristic polynomial of a generating matrix for $S$ and from there attack the problem with integer linear programming.

\begin{ex}\label{Example:457}
Consider $S = \semigroup{4,5,7}$, which has $F(S) = 6$. Since $4,5 \in S$ and $6 \notin S$, Lemma \ref{Lemma:Consecutive} ensures that $\mdim S \geq 3$.  Since $m(S) = 4$, Theorem \ref{Theorem:NSRM2} implies that $\mdim S \leq 4$.  Therefore, $\mdim S \in \{3,4\}$.  We claim that $\mdim S = 3$, and we prove this by constructing an $A \in \M_3(\Q)$ such that $\S(A) = S$.  

If $a\in \Z$ and $p_A(x)$ divides $x^4 - a^4 = (x-a) (x+a) (x^2+a^2)$ in $\Z[x]$, the Cayley--Hamilton theorem ensures that $A^{4n} = a^{4n}I \in \M_3(\Z)$; that is, $\semigroup{4} \subseteq \S(A)$.  We search for representing matrices among rational matrices whose characteristic polynomial divides $x^4-a^4$. At a later stage, we require $a \neq \pm 1$.  For the sake of simplicity, we choose $a=2$.

The characteristic polynomial of the desired matrix $A \in \M_3(\Q)$ (if it exists) might be a cubic divisor of $x^4 - 2^4 = (x-2)(x+2)(x^2+4)$.  We choose $f(x) = (x-2)(x^2+4) = x^3-2 x^2+4 x-8$ and let
\begin{equation*}
    C_f = 
    \begin{bmatrix}
            \0 & \0 & 8 \\
            1 & \0 & -4\\
            \0 & 1 & 2
    \end{bmatrix}
\end{equation*}
denote the corresponding companion matrix, so $f(C_f) = 0$.  If $A = B^{-1} C_f B$, then $p_A(x) = f(x)$, so $\semigroup{4} \subseteq \S(A)$.  We seek a $B \in \M_3(\Q)$ such that $S = \semigroup{4,5,7} = \S(A)$.

Let $B = \operatorname{diag}(2^{x_1},2^{x_2},2^{x_3})$, in which $x_1,x_2,x_3 \in \Z$ are to be determined.  Then
\begin{equation*}
    A = B^{-1}C_p B = 
\begin{bmatrix}
     \0 & \0 & 2^{-x_1+x_3+3} \\
 2^{x_1-x_2} & \0 & -2^{-x_2+x_3+2} \\
 \0 & 2^{x_2-x_3} & 2 \\
\end{bmatrix}.
\end{equation*}
For $\ell \in \N$, observe that $A^{4\ell} = 16^{\ell} I$,
\begin{equation*}
    \begin{split}
    A^{4\ell+1}
    &=
\left[
\begin{array}{ccc}
 \0 & \0 & 2^{4 \ell-x_1+x_3+3} \\
 2^{4 \ell+x_1-x_2} & \0 & -2^{4 \ell-x_2+x_3+2} \\
 \0 & 2^{4 \ell+x_2-x_3} & 2^{4 \ell+1} \\
\end{array}
\right], \\[2pt]
A^{4\ell+2}
&= 
\left[
\begin{array}{ccc}
 \0 & 2^{4 \ell-x_1+x_2+3} & 2^{4 \ell-x_1+x_3+4} \\
 \0 & -4^{2 \ell+1} & \0 \\
 2^{4 \ell+x_1-x_3} & 2^{4 \ell+x_2-x_3+1} & \0 \\
\end{array}
\right], \quad \text{and} \\[2pt]
A^{4\ell+3}
&=
\left[
\begin{array}{ccc}
 2^{4 \ell+3} & 2^{4 \ell-x_1+x_2+4} & \0 \\
 -2^{4 \ell+x_1-x_2+2} & \0 & 2^{4 \ell-x_2+x_3+4} \\
 2^{4 \ell+x_1-x_3+1} & \0 & \0 \\
\end{array}
\right].
\end{split}
\end{equation*}
Since each exponent is a linear expression in the integer variables $x_1,x_2,x_3$, our search for a suitable $B$ can be reduced to an integer linear-programming problem.

We seek $x_1,x_2,x_3 \in \Z$ such that $A^4,A^5,A^7 \in \M_3(\Z)$ and such that $A,A^2,A^3,A^6 \notin \M_3(\Z)$.
Since $F(S) = 6$ and $S = \semigroup{4,5,7}$, it would follow that $\S(A) = S$.
Because $A^4 = 16 I \in \M_3(\Z)$, we can ignore this constraint.  
The condition $A^5 \in \M_3(\Z)$ requires that each of the following linear inequalities must be satisfied:
\begin{equation}\label{eq:LP1}
    \begin{split}
        4+x_1 - x_2 \geq 0,\\
        4 + x_2 - x_3 \geq 0,\\
        6 - x_2 + x_3 \geq 0,\\
        7-x_1 + x_3 \geq 0.
    \end{split}
\end{equation}
We pick up another system of linear inequalities for the condition $A^7 \in \M_3(\Z)$:
\begin{equation}\label{eq:LP2}
    \begin{split}
        5 + x_1 - x_3 \geq 0,\\
        6 + x_1 - x_2 \geq 0,\\
        8 - x_1 + x_2 \geq 0,\\
        8 - x_2 + x_3 \geq 0.
    \end{split}
\end{equation}
Since we want $A \notin \M_3(\Z)$, at least one of the following must hold:
\begin{equation*}
        x_1 -x_2 \leq - 1,\quad
        x_2 - x_3 \leq - 1,\quad
        -x_1+x_2+3 \leq - 1,\quad \text{and} \quad
        -x_2 + x_3 + 2 \leq - 1.    
\end{equation*}
We select $x_1 - x_2 \leq - 1$ and append it to \eqref{eq:LP1} and \eqref{eq:LP2}.  Similarly, the condition
$A^3 \notin \M_3(\Z)$ yields (among other possibilities) the inequality $2+x_1 - x_2 \leq -1$.
From $A^6 \notin \M_3(\Z)$ we obtain $4 + x_1 - x_3 \leq -1$.  So  in total we append to \eqref{eq:LP1} and \eqref{eq:LP2} 
the inequalities
\begin{equation}\label{eq:LP3}
    \begin{split}
        x_1 - x_2 \leq - 1, \\
        2+x_1 - x_2 \leq -1,\\
        4 + x_1 - x_3 \leq -1.
    \end{split}
\end{equation}
Of the many solutions to \eqref{eq:LP1}, \eqref{eq:LP2}, and \eqref{eq:LP3} in $\Z^3$ we choose $(x_1,x_2,x_3) = (0,3,5)$,
so $B = \operatorname{diag}(1,8,32)$ and
\begin{equation*}
    A = 
    \begin{bmatrix}
     \0 & \0 & 256 \\
     \frac{1}{8} & \0 & -16 \\
     \0 & \frac{1}{4} & 2 \\
    \end{bmatrix},
\end{equation*}
which satisfies $\S(A) = S = \semigroup{4,5,7}$, as desired.
\end{ex}

While there is no guarantee that this \emph{companion-matrix method} works, it often does in practice.  \texttt{Mathematica} or any other comparable software can solve these small-scale integer-linear programming problems instantly or, at worst, declare that no solutions exists.  In the latter case, one can try again with a different potential characteristic polynomial.

\begin{ex}\label{Example:561314}
    Consider $S = \semigroup{5,6,13,14}$, which has $\mdim S \geq 3$ by Lemma \ref{Lemma:Consecutive}. Since $m(S)=5$ and the only nonlinear factor of $x^5-a^5$ over $\Z[x]$ is a single degree-$4$ factor, the CMM cannot easily produce a $3 \times 3$ representing matrix for $S$. One way to use the CMM for semigroups with $m(S)=5$ is to use the characteristic polynomial of a representing matrix for a compatible model semigroup. To illustrate this, we show that $\S(A)=S$ for 
    \begin{equation}\label{eq:ABCBpAbove}
    A = 
    \begin{bmatrix}
            \0 & \0 & \frac{1}{262144} \\[2pt]
            4096 & \0 & -\frac{1}{128}\\[2pt]
            \0 & 4096 & 8
    \end{bmatrix}
\end{equation} 
and thus $\mdim S = 3$. We want $S$ and the model semigroup to share some elements so that the CMM has the best chance of producing $S$; this is analogous to how we chose the characteristic polynomial in Example \ref{Example:457} so that $\semigroup{4} \subseteq \S(A)$ by construction. Take $S' = \semigroup{6,7,11,15,16}$, which shares $6,11,12,13,14,15,16,\ldots$ with $S$. Lemma \ref{Lemma:Consecutive} ensures that $\mdim S' \geq 3$ and one can show, as we do below, that a representing matrix for $S'$ is 
\begin{equation*}
    A'=
\begin{bmatrix}
 \0 & \0 & \frac{1}{512} \\[2pt]
 16777216 & \0 & -16384 \\[2pt]
 \0 & \frac{1}{512} & 8 \\
\end{bmatrix},
\end{equation*} 
so $\mdim S' = 3$. Since $m(S') = 6$ and 
\begin{equation*}
x^6 - a^6 = (x-a) (x+a) (x^2-a x +a^2) (x^2+a x+a^2)    
\end{equation*}
has four degree-$3$ factors, it is much easier to find a $3 \times 3$ representing matrix for $S'$ than for $S$.
The characteristic polynomial of $A'$ is $p_{A'}(x) = x^3 - 8x^2 + 32x - 64$. We take this as our educated guess
for a possible characteristic polynomial for the desired $A$. Using 
\begin{equation*}
    C_{p_{A'}(x)} = 
    \begin{bmatrix}
            \0 & \0 & 64 \\
            1 & \0 & -32\\
            \0 & 1 & 8
    \end{bmatrix}
    \quad \text{and} \quad
    B = \operatorname{diag}(2^{12},2^{0},2^{-12}),
\end{equation*} 
in which $B$ was obtained through integer linear programming,
we obtain the matrix $A = B^{-1}C_{p_{A'}}B$ given in \eqref{eq:ABCBpAbove}.
\end{ex}

\begin{ex}
    As Example \ref{Example:561314} suggests, semigroups with $m(S) = 5$ can be difficult to find $3 \times 3$ generating matrices for (if they exist at all).  For example, $S=\semigroup{5,6,8}$ proved the most stubborn of all, and it required new techniques altogether.  First observe that Lemma \ref{Lemma:Consecutive} ensures that $\mdim S \geq 3$, so we are in the same difficult position as we were in Example \ref{Example:561314}.  Despite many attempts, the method of that example appeared inadequate.
    However, we could take an educated guess as to the form of $p_A(x)$ for an $A \in \M_3(\Q)$ such that $\S(A) = S$.
    Suppose that $\S(A) = \semigroup{5,6,8}$, in which $A \in \M_3(\Q)$ and $p_A(x) = x^3 + px^2 +qx + r$ with $p,q,r \in \Z$.
    The Cayley--Hamilton theorem ensures that $A^3 + pA^2 + qA + rI = 0$.  From this we can make a variety of observations.
    \begin{enumerate}[leftmargin=*]
        \item First note that $r\neq 0$ (so $A$ is invertible). Suppose toward a contradiction that $r=0$. Then $A^7 + pA^6 + qA^5 = A^4(A^3 + pA^2 + qA + rI) = 0$, so $A^7 \in \M_3(\Z)$ since $5,6 \in S$.  Since this contradicts the fact that $7 \notin S$, we conclude that $r \neq 0$.  

        \item Next observe that $q \neq 0$.  Suppose toward a contradiction that $q = 0$.  Then
$A^9 + pA^8  + r A^6 = A^6 (A^3 + pA^2 + qA + rI) = 0$, so $A^9 \in \M_3(\Z)$
since $6,8 \in S$.  Since this contradicts the fact that $9 \notin S$, we conclude that $q \neq 0$.

\item We claim that $\gcd(q,r) \geq 2$.  Since $A^{11} + pA^{10} + qA^9 + r A^8 = A^8 p_A(A) =0$ and $8,10,11\in S$,
we conclude that $qA^9\in \M_3(\Z)$.  In a similar manner, examine $A^{12} + pA^{11} + qA^{10} + r A^9 = 0$ and deduce that
$rA^9 \in \M_3(\Z)$.  Therefore, $\gcd(q,r) A^9 \in \M_3(\Z)$.  Since $9 \notin S$, we conclude that $\gcd(q,r) \geq 2$.
    \end{enumerate}
    Similar arguments suggest that $\gcd(p,q,r)\geq 2$.
    Preliminary computations indicated that $2$ did not provide enough granularity.    
    Ultimately, we took $\gcd(p,q,r) = 8$ and used the CMM as follows.  We
    generated random polynomials $f(x)=x^3+px^2+qx+r$ with $\gcd(p,q,r)=8$ and random lower-triangular matrices $B$ whose nonzero entries were nonnegative powers of two.  Since inversion of such $B$ is fast, we could test matrices of the form $A = B^{-1} C_f B$ by the billions: compute $A^n = B^{-1} C_f^n B$ for $n=1,2,3,\ldots,14$, checking step-by-step whether $A^n \in \M_3(\Z)$ for $n\in S = \{5,6,8,10,11,12,13,14,\ldots\}$ (since $5 \in S$, Lemma \ref{Lemma:Consecutive} ensures that we need check no power higher than $14$).  At the first disagreement, reject $A$ and start again.  
    On a high-performance computing cluster, this method quickly produced the generating matrix
    \begin{equation*}
        A =  B^{-1} C_f B =
        \begin{bmatrix}    
         -12 & -196608 & -768 \\[2pt]
         \frac{2363}{4096} & 9448 & \frac{1181}{32} \\[2pt]
         -\frac{273}{4} & -1379072 & -5388 \\
        \end{bmatrix},
    \end{equation*}
    in which $f(x) = x^3 - 4048 x^2 + 2840 x + 48$ and 
    \begin{equation*}
        B = 
        \begin{bmatrix}
         2^2 & \0 & \0 \\
         2^{10} & 2^{14} & \0 \\
         1 & 2^{14} & 2^6 \\   
        \end{bmatrix}.
    \end{equation*}   
\end{ex}

%%%%%%%%%%%%%%%%%%%%%%%%%%%%%%%%%%%%%%%%%%%%%%%%
\section{Matricial dimension by Frobenius number and genus}\label{Section:Frobenius}
We now demonstrate the collective power of our techniques by determining the matricial dimension of every numerical semigroup $S$ with Frobenius number $F(S) \leq 10$ or genus $g(S) \leq 6$.  Semigroups with Frobenius number at most $4$ can be attacked with methods from \cite{NSRM1}. As the Frobenius number increases, methods from this paper are required.  Theorems \ref{Theorem:NSRM2}, \ref{Theorem:LatticeExamples}, and \ref{Theorem:m3}, and the companion-matrix method (CMM) descibed in Section \ref{Section:Companion}, become the primary workhorses. Ultimately, high-performance computing becomes valuable for addressing the final few difficult semigroups.

\bigskip\noindent\textbf{Matricial dimension by Frobenius number.}

{\small
%%%%%%%%%%%%%%%%%%%%%%%
\bigskip\noindent\textbf{$F(S)=1$}
\begin{itemize}\addtolength{\itemsep}{2pt plus 1pt minus 0.5pt}
\item $S=\semigroup{2,3}$: $\mdim S=2$ by Theorem \ref{Theorem:LatticeExamples}(c) with $a=2$ and $b=3$.
\end{itemize}

%%%%%%%%%%%%%%%%%%%%%%%%%%%%
\bigskip\noindent\textbf{$F(S)=2$}
\begin{itemize}\addtolength{\itemsep}{2pt plus 1pt minus 0.5pt}
\item $S = \semigroup{3,4,5}$: $\mdim S = 2$ by Theorem \ref{Theorem:m3}.
\end{itemize}

%%%%%%%%%%%%%%%%%%%%%%%%%%%%%
\bigskip\noindent\textbf{$F(S)=3$}
\begin{itemize}\addtolength{\itemsep}{2pt plus 1pt minus 0.5pt}
    \item $S=\semigroup{2,5}$: $\mdim S=2$ by Theorem \ref{Theorem:LatticeExamples}(c) with $a=2$ and $b=5$.

    \item $S=\semigroup{4,5,6,7}$: $\mdim S=2$ by Theorem \ref{Theorem:LatticeExamples}(b) since $S = S_4$.
\end{itemize}

%%%%%%%%%%%%%%%%%%%%%%%%%%%%%%%%%
\bigskip\noindent\textbf{$F(S)=4$}
\begin{itemize}\addtolength{\itemsep}{2pt plus 1pt minus 0.5pt}
    \item $S=\semigroup{3,5,7}$: $\mdim S =2$ by Theorem \ref{Theorem:m3} (see also \cite[Rem.~5.8]{NSRM1}).
    
    \item $S=\semigroup{5,6,7,8,9}$: $\mdim S =2$ by Theorem \ref{Theorem:LatticeExamples}(b) since $S = S_5$.
\end{itemize}

%%%%%%%%%%%%%%%%%%%%%%%%%%%%%%%%%%%
\bigskip\noindent\textbf{$F(S)=5$}
\begin{itemize}\addtolength{\itemsep}{2pt plus 1pt minus 0.5pt}
    \item $S=\semigroup{2,7}$: $\mdim S=2$ by Theorem \ref{Theorem:LatticeExamples}(c) with $a=2$ and $b=7$.
    
    \item $S=\semigroup{3,4}$: $\mdim S =3$ by Theorem \ref{Theorem:m3} (see also \cite[Ex.~5.2]{NSRM1}). 

    \item $S=\semigroup{3,7,8}$: $\mdim S =2$ by Theorem \ref{Theorem:m3}.

    \item $S=\semigroup{4,6,7,9}$: $\mdim S =2$ by Theorem \ref{Theorem:LatticeExamples}(c) with $a=4$ and $b=6$.
    
    \item $S=\semigroup{6,7,8,9,10,11}$: $\mdim S =2$ by Theorem \ref{Theorem:LatticeExamples}(b) since $S = S_6$.
\end{itemize}

%%%%%%%%%%%%%%%%%%%%%%%%%%%%%%%%%%%%%%%%
\bigskip\noindent\textbf{$F(S)=6$}
\begin{itemize}\addtolength{\itemsep}{2pt plus 1pt minus 0.5pt}
    \item $S=\semigroup{4,5,7}$: $\mdim S =3$.  See Example \ref{Example:457}.

    \item $S=\semigroup{4,7,9,10}$: $\mdim S =2$ by Theorem \ref{Theorem:LatticeExamples}(c) with $a=4$ and $b=7$.
    
    \item $S=\semigroup{5,7,8,9,11}$: $\mdim S =2$ by Theorem \ref{Theorem:LatticeExamples}(c) with $a=5$ and $b=7$.
    
    \item $S=\langle7,8,9,10,11,12,13 \rangle$: $\mdim S =2$ by Theorem \ref{Theorem:LatticeExamples}(b) since $S = S_7$.
\end{itemize}

%%%%%%%%%%%%%%%%%%%%%%%%%%%%%%%%%%%%%%%%%%%%%%%
\bigskip\noindent\textbf{$F(S)=7$}
\begin{itemize}\addtolength{\itemsep}{2pt plus 1pt minus 0.5pt}
    \item $S=\semigroup{2,9}$: $\mdim S=2$. Use Theorem \ref{Theorem:LatticeExamples}(c) with $a=2$ and $b=9$.
    
    \item $S=\semigroup{3,5}$: $\mdim S =3$ by Theorem \ref{Theorem:m3}.
    
    \item $S=\semigroup{3,8,10}$: $\mdim S =2$ by Theorem \ref{Theorem:m3}.

    \item $S=\semigroup{4,5,6}$: $\mdim S =4$.  Since $4,5,6 \in S$ and $7 \notin S$, Lemma \ref{Lemma:Consecutive} ensures that $\mdim S \geq 4$.  Theorem \ref{Theorem:NSRM2} with $d=4$ shows that $\mdim S= 4$.
    
    \item $S=\semigroup{4,5,11}$: $\mdim S =3$.  Since $4,5 \in S$ and $6 \notin S$, Lemma \ref{Lemma:Consecutive} ensures that $\mdim S \geq 3$.  The CMM ensures that $\mdim S = 3$ since $S = \S\left( \left[\begin{smallmatrix}\0 & \0 & \frac{1}{8} \\512 & \0 & -32 \\ \0 & \frac{1}{8} & 2 \\\end{smallmatrix}\right]\right)$.
    
   \item $S=\semigroup{4,6,9,11}$: $\mdim S =2$.  
%Observe that $S = \S\left(\left[\begin{smallmatrix}0&- \frac{3}{16}\\ 32 & -12\end{smallmatrix}\right]\right)$.} 
The CMM provides $S = \S\left(\left[\begin{smallmatrix}0&32\\ \frac{1}{16} & 8\end{smallmatrix}\right]\right)$.
    
    \item $S=\semigroup{4,9,10,11}$: $\mdim S =2$. Use Theorem \ref{Theorem:LatticeExamples}(c) with $a=4$ and $b=9$.
    
    \item $S=\semigroup{5,6,8,9}$: $\mdim S =3$.  Since $5,6 \in S$ and $7 \notin S$, Lemma \ref{Lemma:Consecutive} ensures that $\mdim S \geq 3$. The CMM ensures that $\mdim S = 3$ since
    $S = \S\left( \left[\begin{smallmatrix} \0 & \0 & 16 \\ 64 & \0 & -1024 \\ \0 & \frac{1}{128} & 4 \end{smallmatrix}\right]\right)$.

   \item $S=\semigroup{5,8,9,11,12}$:
    $\mdim S =2$.  Use Theorem \ref{Theorem:LatticeExamples}(c) with $a=5$ and $b=8$.

    \item $S=\semigroup{6,8,9,10,11,13}$: $\mdim S =2$. Use Theorem \ref{Theorem:LatticeExamples}(c) with $a=7$ and $b=8$.
    % since $S = \S\left(\left[\begin{smallmatrix}32&\frac{1}{128}\\512&8\end{smallmatrix}\right]\right)$.
    
    \item $S=\semigroup{8,9,10,11,12,13,14,15}$: $\mdim S =2$ by Theorem \ref{Theorem:LatticeExamples}(b) since $S = S_8$.
\end{itemize}

%%%%%%%%%%%%%%%%%%%%%%%%%%%%%%%%%%%%%%%%%%%%%%%%
\bigskip\noindent\textbf{$F(S)=8$}
\begin{itemize}\addtolength{\itemsep}{2pt plus 1pt minus 0.5pt}
    \item $S=\semigroup{3,7,11}$: $\mdim S =3$ by Theorem \ref{Theorem:m3}.
    
    \item $S=\semigroup{3,10,11}$: $\mdim S =2$ by Theorem \ref{Theorem:m3}.  
        
    \item $S=\semigroup{5,6,7,9}$: $\mdim S = 4$.  Since $5,6,7 \in S$ and $8 \notin S$, Lemma \ref{Lemma:Consecutive} ensures that $\mdim S \geq 4$.  The CMM ensures that $\mdim S = 4$ since $S =  \S\left( \left[ \begin{smallmatrix}\0 & \0 & -32 & \frac{1}{16} \\ 1 & \0 & -16 & \0 \\ \0 & \frac{1}{4} & -2 & \0 \\ \0 & \0 & -1024 & \0 \\ \end{smallmatrix}\right]\right)$.
        
    \item $S=\semigroup{5,6,9,13}$: $\mdim S = 3$.  Since $5,6\in S$ and $7 \notin S$, Lemma \ref{Lemma:Consecutive} ensures that $\mdim S \geq 3$.  The CMM ensures that $\mdim S = 3$ since $S =  \S\left(\left[
\begin{smallmatrix}
 \0 & -\frac{1}{8} & 64 \\
 64 & 4 & \0 \\
 \0 & \frac{1}{512} & \0 \\
\end{smallmatrix}
\right] \right)$.
        
    \item $S=\semigroup{5,7,9,11,13}$:
        $\mdim S =3$. Since $\s(S) = \{5,7\}$ and $\gcd(5,7)= 1$, Lemma \ref{Lemma:Small} ensures that $\mdim S \geq 3$.
        The CMM ensures that $\mdim S = 3$ since $S = \S\left( \left[ \begin{smallmatrix}
 \0 & \0 & \0 \\
 \frac{1}{64} & \0 & -6144 \\
 \0 & \frac{1}{128} & \0 \\
        \end{smallmatrix}\right]\right)$.
        
    \item $S=\langle5,9,11,12,13\rangle$:
        $\mdim S =2$.  Use Theorem \ref{Theorem:LatticeExamples}(c) with $a=5$ and $b=9$.
        
    \item $S=\langle 6,7,9,10,11\rangle$:
        $\mdim S =3$.  Since $6,7 \in S$ and $8 \notin S$, Lemma \ref{Lemma:Consecutive} ensures that $\mdim S \geq 3$. 
        The CMM ensures that $\mdim S = 3$ since $S = \S\left( \left[\begin{smallmatrix}
 \0 & \0 & \frac{1}{8} \\
 1024 & \0 & -128 \\
 \0 & \frac{1}{16} & 4 \\
                \end{smallmatrix}\right]\right)$.
    
    \item $S=\semigroup{6,9,10,11,13,14}$:
        $\mdim S =2$. Use Theorem \ref{Theorem:LatticeExamples}(c) with $a=6$ and $b=9$.

    \item $S=\semigroup{7,9,10,11,12,13,15}$:
        $\mdim S =2$. Use Theorem \ref{Theorem:LatticeExamples}(c) with $a=7$ and $b=9$.
        
    \item $S=\semigroup{9,10,11,12,13,14,15,16,17}$:
        $\mdim S =2$ by Theorem \ref{Theorem:LatticeExamples}(b) since $S = S_9$.
\end{itemize}

%%%%%%%%%%%%%%%%%%%%%%%%%%%%%%%%%%%%%%%%%%%%%%%%
\bigskip\noindent\textbf{$F(S)=9$}
\begin{itemize}\addtolength{\itemsep}{2pt plus 1pt minus 0.5pt}
    \item $S=\semigroup{2,11}$: $\mdim S =2$.
    Use Theorem \ref{Theorem:LatticeExamples}(c) with $a=2$ and $b=11$.
    
    \item $S=\semigroup{4,6,7}$: $\mdim S =4$.
    Since $6,7,8 \in S$ and $9 \notin S$, Lemma \ref{Lemma:Consecutive} ensures that $\mdim S \geq 4$.  Theorem \ref{Theorem:NSRM2} with $d=4$ shows that $\mdim S= 4$.
        
    \item $S=\semigroup{4,6,11,13}$: $\mdim S =2$.  The CMM provides $S = \S\left(\left[\begin{smallmatrix}
        \0&64\\
        \frac{1}{32}&16
    \end{smallmatrix}\right]\right)$.
    
    \item $S=\semigroup{4,7,10,13}$:
    $\mdim S = 3$.
    Since $7,8 \in S$ and $9 \notin S$, Lemma \ref{Lemma:Consecutive} ensures that $\mdim S \geq 3$. The CMM ensures that $\mdim S = 3$ since $S = \S\left( \left[
        \begin{smallmatrix}
         \0 & -\frac{1}{4} & 256 \\
         16 & 2 & \0 \\
         \0 & \frac{1}{512} & \0 \\
        \end{smallmatrix}
        \right] \right)$.

    \item $S=\semigroup{4,10,11,13}$: $\mdim S =2$. Use Theorem \ref{Theorem:LatticeExamples}(c) with $a=4$ and $b=10$.
    
   \item $S=\semigroup{5,6,7}$: $\mdim S = 4$.
        Since $5,6,7\in S$ but $8\notin S$, Lemma \ref{Lemma:Consecutive} ensures that $\mdim S \geq 4$.  The CMM ensures that $\mdim S=4$ since $S = \S\left( \left[
            \begin{smallmatrix}
             \0 & \0 & -64 & \frac{1}{16} \\
             2 & \0 & -64 & 0 \\
             \0 & \frac{1}{16} & -2 & 0 \\
             \0 & \0 & -2048 & 0 \\
            \end{smallmatrix}
            \right]\right)$.

    \item $S=\semigroup{5,6,7,8}$: $\mdim S =5$.  Since $5,6,7,8 \in S$ and $9 \notin S$, Lemma \ref{Lemma:Consecutive} ensures that $\mdim S \geq 5$.  Theorem \ref{Theorem:NSRM2} with $d=5$ shows that $\mdim S= 5$.
    
    \item $S=\langle5,6,8\rangle$: $\mdim S=3$. Since $5,6\in S$ but $7\notin S$, Lemma \ref{Lemma:Consecutive} ensures $\mdim S\geq 3$. Since $S=\S\bigg(\left[ \begin{smallmatrix}
 -12 & -196608 & -768 \\
 \frac{2363}{4096} & 9448 & \frac{1181}{32} \\
 -\frac{273}{4} & -1379072 & -5388 \\
\end{smallmatrix}\right]\bigg)$, we have $\mdim S=3.$

    \item $S=\langle5,6,13,14\rangle$: $\mdim S=3$. Since $5,6\in S$ but $7\notin S$, Lemma \ref{Lemma:Consecutive} ensures $\mdim S\geq 3$. The CMM ensures that $\mdim S=3$ since $S=\S\bigg(\left[ \begin{smallmatrix}
 \0 & \0 & 256 \\
 \frac{1}{4096} & \0 & -\frac{1}{32} \\
 \0 & 256 & 4 \\
\end{smallmatrix}\right]\bigg)$.

       \item $S=\langle5,7,8,11\rangle$: $\mdim S=3$. Since $7,8\in S$ but $9\notin S$, Lemma \ref{Lemma:Consecutive} ensures $\mdim S\geq 3$. Since $S=\S\bigg(\left[
\begin{smallmatrix}
 -\frac{317}{4} & \frac{1}{32} & -628 \\
 -1286 & -\frac{1}{4} & -10336 \\
 \frac{837}{32} & \frac{3}{256} & \frac{419}{2} \\
\end{smallmatrix}
\right]\bigg)$, we have $\mdim S=3$.

    \item $S=\semigroup{5,7,11,13}$: $\dim S = 3$. Since $\s(S)=\{5,7\}$ and $\gcd(5,7)=1$, Lemma \ref{Lemma:Small} ensures that $\mdim S\geq 3$. The CMM ensures that $\mdim S=3$ since 
    $S = \S\left( \left[
\begin{smallmatrix}
 \0 & \0 & \frac{1}{16} \\
 \frac{1}{16} & \0 & -\frac{5}{512} \\
 \0 & 2048 & -12 \\
\end{smallmatrix}
\right] \right)$.

    \item $S=\langle5,8,11,12,14\rangle$: $\mdim S=3.$ Since $\s(S)=\{5,8\}$ and $\gcd(5,8)=1$, Lemma \ref{Lemma:Small} ensures that $\mdim S\geq 3$. The CMM ensures that $\mdim S=3$ since
            $S=\S\bigg(\left[
            \begin{smallmatrix}
             \0 & -\frac{1}{16} & \frac{1}{512} \\
             64 & 2 & \0 \\
             \0 & \0 & \0 \\
            \end{smallmatrix}
            \right]\bigg)$.
         
    \item $S=\langle5,11,12,13,14\rangle$: $\mdim S =2$ 
    Use Theorem \ref{Theorem:LatticeExamples}(c) with $a=5$ and $b=11$.
    \item$S=\langle6,7,8,10,11\rangle$: $\mdim S =4$. Since $6,7,8\in S$ but $9\notin S$, Lemma \ref{Lemma:Consecutive} ensures $\mdim S\geq4$. The CMM ensures that $\mdim S=4$ since $S=\S\bigg(\left[
\begin{smallmatrix}
 \0 & \0 & \0 & 2048 \\
 \frac{1}{4} & \0 & \0 & 256 \\
 \0 & 2 & \0 & \0 \\
 \0 & \0 & \frac{1}{64} & -2 \\
\end{smallmatrix}
\right]\bigg)$.
    
    \item $S=\langle6,7,10,11,15\rangle$: $\mdim S =3$. Since $6,7\in S$ but $8\notin S$, Lemma \ref{Lemma:Consecutive} ensures $\mdim S\geq 3.$ The CMM ensures that $\mdim S=3$ since
        $S =\S\bigg(\left[
        \begin{smallmatrix}
         \0 & \0 & 8388608 \\
         \frac{1}{256} & \0 & -16384 \\
         \0 & \frac{1}{512} & 8 \\
        \end{smallmatrix}
        \right]\bigg)$.

   \item $S=\langle6,8,10,11,13,15\rangle$: $\mdim S =2$. The CMM ensures that $\mdim S=2$ since $S =\S\bigg(\left[
        \begin{smallmatrix}
         \0 & -69632  \\
         \frac{1}{1024} & -64 \\
        \end{smallmatrix}
        \right]\bigg)$.
    
    \item $S=\langle6,10,11,13,14,15\rangle$:
    $\mdim S =2$. Use Theorem \ref{Theorem:LatticeExamples}(c) with $a=6$ and $b=10$.

    \item $S=\langle7,8,10,11,12,13\rangle$: $\mdim S =3$. Since $7,8\in S$ but $9\notin S$,  Lemma \ref{Lemma:Consecutive} ensures that $\mdim S \geq 3$. The CMM ensures that $\mdim S=3$ since 
    $S =\S\bigg(\left[
        \begin{smallmatrix}
         \0 & \0 & 2048 \\
         \frac{1}{8} & \0 & -128 \\
         \0 & \frac{1}{32} & -12 \\
        \end{smallmatrix}
        \right]\bigg)$. 
    
    \item $S=\langle7,10,11,12,13,15,16\rangle$: $\mdim S =2$. Use Theorem \ref{Theorem:LatticeExamples}(c) with $a=7$ and $b=10$.
    
    \item $S=\langle8,10,11,12,13,14,15,17\rangle$: $\mdim S =2$. Use Theorem \ref{Theorem:LatticeExamples}(c) with $a=8$ and $b=10$.
    
    \item$S=\langle10,11,12,13,14,15,16,17,18,19\rangle$: $\mdim S =2$ by Theorem \ref{Theorem:LatticeExamples}(b) since $S = S_{10}$.
\end{itemize}

%%%%%%%%%%%%%%%%%%%%%%%%%%%%%%%%%%%%%%%%%%%%%%%%
\bigskip\noindent\textbf{$F(S)=10$}
\begin{itemize}\addtolength{\itemsep}{2pt plus 1pt minus 0.5pt}
    \item $S=\langle3,8,13\rangle$: 
        $\mdim S =3$. Since $8,9 \in S$ and $10 \notin S$, Lemma \ref{Lemma:Consecutive} ensures that $\mdim S \geq 3$.  Theorem \ref{Theorem:NSRM2} with $d=3$ shows that $\mdim S= 3$.
        
    \item $S=\langle3,11,13\rangle$: 
        $\mdim S =2$. Use Theorem\ref{Theorem:LatticeExamples}(c) with $a=3$ and $b=11$.
    
    \item $S=\langle4,7,9\rangle$: 
        $\mdim S =4$. Since $7,8,9 \in S$ and $10 \notin S$, Lemma \ref{Lemma:Consecutive} ensures that $\mdim S \geq 4$.  Theorem \ref{Theorem:NSRM2} with $d=4$ shows that $\mdim S= 4$.
        
    \item $S=\langle4,7,13\rangle$:
        $\mdim S =3$. Since $7,8\in S$ but $9\notin S$, Lemma \ref{Lemma:Consecutive} ensures $\mdim S\geq 3$. The CMM ensures that $\mdim S=3$ since $S=\S\bigg(\left[
\begin{smallmatrix}
 \0 & -\frac{1}{64} & 32 \\
 256 & 2 & \0 \\
 \0 & \frac{1}{1024} & \0 \\
\end{smallmatrix}
\right]\bigg)$.
   
   \item $S=\langle4,9,11,14\rangle$:
        $\mdim S =3$. Since $8,9\in S$ but $10\notin S$, Lemma \ref{Lemma:Consecutive} ensures $\mdim S\geq 3$. The CMM ensures that $\mdim S=3$ since $S=\S\bigg(\left[
\begin{smallmatrix}
 \0 & -16 & \frac{1}{128} \\
 \frac{1}{4} & 2 & \0 \\
 \0 & 4096 & \0 \\
\end{smallmatrix}
\right]\bigg)$.
    \item $S=\langle4,11,13, 14\rangle$: 
        $\mdim S =2$. Use Theorem \ref{Theorem:LatticeExamples}(c) with $a=4$ and $b=11$.
    
    \item $S=\langle6,7,8,9,11\rangle$:
        $\mdim S =5$. Since $6,7,8,9\in S$ but $10\notin S$, Lemma \ref{Lemma:Consecutive} ensures $\mdim S \geq5$. The CMM ensures that $\mdim S=5$ since $S=\S\bigg( \left[
\begin{smallmatrix}
 \0 & \0 & \0 & -32 & 4096 \\
 \frac{1}{2} & \0 & \0 & 8 & 0 \\
 \0 & \frac{1}{16} & \0 & -\frac{1}{4} & 0 \\
 \0 & \0 & 16 & 2 & 0 \\
 \0 & \0 & \0 & \frac{1}{64} & 0 \\
\end{smallmatrix}
\right]\bigg)$.

    \item $S=\langle6,7,8,11\rangle$:
        $\mdim S =4$. Since $6,7,8\in S$ but $9\notin S$, Lemma \ref{Lemma:Consecutive} ensures $\mdim S\geq 4$.  
    The CMM ensures that $\mdim S=4$ since $S=\S\bigg(\left[
\begin{smallmatrix}
 \0 & \0 & 64 & \frac{1}{32} \\
 1 & \0 & \0 & \0 \\
 \0 & \frac{1}{8} & -2 & \0 \\
 \0 & \0 & 4096 & \0 \\
\end{smallmatrix}
\right]\bigg)$.

    \item $S=\langle6,7,9,11\rangle$:
        $\mdim S=3$. Since $6,7\in S$ but $8\notin S$, Lemma \ref{Lemma:Consecutive} ensures $\mdim S\geq 3$. Then $\mdim S=3$ since $S=\S\bigg(\left[
\begin{smallmatrix}
 \frac{337}{16}&  \frac{337}{4}& 1348 \\
 -\frac{529}{32} & -\frac{561}{8} & -1122 \\
 -\frac{3247}{1024} & -\frac{6767}{256} & -\frac{7279}{16} \\
\end{smallmatrix}
\right]\bigg)$.
        
    \item $S=\langle6,7,11,15,16\rangle$: $\mdim S=3.$
       Since $6,7\in S$ but $8\notin S$, Lemma \ref{Lemma:Consecutive} ensures $\mdim S\geq 3$.  
The CMM ensures that $\mdim S=3$ since $S=\S\bigg(\left[
\begin{smallmatrix}
 \0 & \0 & 16777216 \\
 \frac{1}{64} & \0 & -131072 \\
 \0 & \frac{1}{4096} & 8 \\
\end{smallmatrix}
\right]\bigg)$.

    \item $S=\langle6,8,9,11,13\rangle$:
        $\mdim S =3$. Since $8,9\in S$ but $10\notin S$, Lemma \ref{Lemma:Consecutive} ensures $\mdim S \geq 3$.
        The CMM ensures that $\mdim S=3$ since
        $S =\S\bigg(\left[
        \begin{smallmatrix}
         \0 & \0 & -56 \\
         \frac{1}{16} & \0 & 1 \\
         \0 & 4 & 16 \\
        \end{smallmatrix}
        \right]\bigg)$.
       
    \item $S=\langle6,8,11,13,15\rangle$:
        Observe that $\s(S)=\{6,8\}$ and $\gcd \s(S) = 2$.  Since $\mdim S/2 = \mdim \semigroup{3,4} = 3$ by \cite[~Ex. 5.2]{NSRM1}.
        Theorem \ref{Theorem:Quotient} ensures that $\mdim S \geq 3$.
        The CMM ensures that $\mdim S=3$ since $S=\S\bigg(\left[
\begin{smallmatrix}
 \0 & \0 & 128 \\
 \frac{1}{1024} & \0 & -\frac{1}{8} \\
 \0 & 64 & 4 \\    
\end{smallmatrix}
\right]\bigg)$.

    \item $S=\langle6,9,11,13,14,16\rangle$:
        $\mdim S=2$.
        The CMM provides $S=\S\bigg(\left[
\begin{matrix}
    \0&\frac{7}{1024}\\
    4096&-6\\
\end{matrix}
\right]\bigg)$.

    \item $S=\langle6,11,13,14,15,16\rangle$:
        $\mdim S =2$. Use Theorem \ref{Theorem:LatticeExamples}(c) with $a=6$ and $b=11$.
    
    \item $S=\langle7,8,9,11,12,13\rangle$:
        $\mdim S=4$. Since $7,8,9\in S$ but $10\notin S$, Lemma \ref{Lemma:Consecutive} ensures $\mdim S\geq4$. The CMM ensures that $\mdim S=4$ since $S=\S\bigg(\left[
\begin{smallmatrix}
 \0 & \0 & \0 & \frac{1}{4} \\
 4 & \0 & \0 & \0 \\
 \0 & 4 & 4 & \0 \\
 \0 & \0 & \frac{1}{2} & 2 \\
\end{smallmatrix}
\right]\bigg)$.
        
    \item $S=\langle7,8,11,12,13,17\rangle$:
        $\mdim S=3$. Since $7,8\in S$ but $8\notin S$, Lemma \ref{Lemma:Consecutive} ensures $\mdim S\geq 3$. The CMM ensures that $\mdim S=3$ since $S = \S\left( \left[
    \begin{smallmatrix}
    \0 & \0 & -4096 \\
    \frac{1}{32} & \0 & -128 \\
    \0 & \frac{1}{16} & -10 \\
    \end{smallmatrix}
    \right]\right)$.
   
    \item $S=\langle7,9,11,12,13,15,17\rangle$:
        $\mdim S =3$. Since $\s(S)=\{7,9\}$ and $\gcd(7,9)=1$, Lemma \ref{Lemma:Small} ensures $\mdim S\geq3$. The CMM ensures that $\mdim S=3$ since $S = \S\left( \left[
    \begin{smallmatrix}
    \0 & \0 & -\frac{3}{4} \\
    256 & \0 & -208 \\
    \0 & \frac{1}{8} & 32 \\
    \end{smallmatrix}
    \right]\right)$.

    \item $S=\langle7,11,12,13,15,16,17\rangle$:
        $\mdim S =2$. Use Theorem \ref{Theorem:LatticeExamples}(c) with $a=7$ and $b=11$.
        
    \item $S=\langle8,9,11,12,13,14,15\rangle$:
        $\mdim S =3$. Since $8,9\in S$ but $10\notin S$, Lemma \ref{Lemma:Consecutive} ensures $\mdim S\geq 3$. The CMM ensures that $\mdim S=3$ since $S = \S\left( \left[
    \begin{smallmatrix}
    \0 & \0 & 20 \\
    \frac{1}{16} & \0 & \frac{3}{2} \\
    \0 & 8 & 18 \\
    \end{smallmatrix}
    \right]\right)$.
        
    \item $S=\langle8,11,12,13,14,15,17,18\rangle$:
     $\mdim S =2$. Use Theorem \ref{Theorem:LatticeExamples}(c) with $a=8$ and $b=11$.
     
    \item $S=\langle9,11,12,13,14,15,16,17,19\rangle$: $\mdim S =2$. Use Theorem \ref{Theorem:LatticeExamples}(c) with $a=9$ and $b=11$.
    
    \item $S=\langle11,12,13,14,15,16,17,18,19,20,21\rangle$: $\mdim S =2$ by Theorem \ref{Theorem:LatticeExamples}(b) since $S = S_{11}$.

\end{itemize}
}
\bigskip\noindent\textbf{Matricial dimension by genus.}
Most of the semigroups with genus at most $6$ are already included in the list above.  The exceptions are the following.

\begin{itemize}\addtolength{\itemsep}{2pt plus 1pt minus 0.5pt}
    \item $S=\semigroup{2,13}$: $\mdim S=2$. Use Theorem \ref{Theorem:LatticeExamples}(c) with $a=2$ and $b=13$.
    \item $S=\semigroup{3,7}$: $\mdim S=3$ by Theorem \ref{Theorem:m3}.
    \item $S=\semigroup{4,5}$: $\mdim S=4$. Since $8,9,10\in S$ but $11\notin S$, Lemma \ref{Lemma:Consecutive} ensures $\mdim S\geq4$. Theorem \ref{Theorem:NSRM2} with $d=4$ shows that $\mdim S= 4$.
    \item $S=\semigroup{4,6,9}$: $\mdim S=4$. Since $8,9,10\in S$ but $11\notin S$, Lemma \ref{Lemma:Consecutive} ensures $\mdim S\geq4$. Theorem \ref{Theorem:NSRM2} with $d=4$ shows that $\mdim S= 4$.
    \item $S=\semigroup{5,7,8,9}$: $\mdim S=5$. Since $7,8,9,10\in S$ but $11\notin S$, Lemma \ref{Lemma:Consecutive} ensures $\mdim S\geq5$. Theorem \ref{Theorem:NSRM2} with $d=5$ shows that $\mdim S= 5$.
    \item $S=\semigroup{6,7,8,9,10}$: $\mdim S=6$. Since $6,7,8,9,10\in S$ but $11\notin S$, Lemma \ref{Lemma:Consecutive} ensures $\mdim S\geq 6$. Theorem \ref{Theorem:NSRM2} with $d=6$ shows that $\mdim S= 6$.
    \end{itemize}

%%%%%%%%%%%%%%%%%%%%%%%%%%%%%%%%%%%%%%%
\section{Open questions}\label{Section:Open}

A natural extension of this work is to extend the tabulation of matricial dimensions beyond $F(S) \leq 10$ and $g(S) \leq 6$. How far can this be extended before new techniques are required?  A first step in this direction is the following question.

\begin{prob}
    Compute the matricial dimensions of all semigroups with $F(S) = 11$.
\end{prob}

In a similar manner, one wishes to compute the matricial dimensions of numerical semigroups with $m(S) = 4$ and beyond. Beyond $m(S) = 3$, it becomes prudent to work with Ap\'ery sets instead of generators, as we did in the proof of Theorem \ref{Theorem:m3}.

\begin{prob}
    Compute the matricial dimensions of all semigroups with $m(S) = 4$.
\end{prob}

The semigroup $S=\semigroup{5,7,16,18}$, which has Frobenius number $13$, appeared in \cite{NSRM3} as a counterexample to the lonely element conjecture.  It is known that $3 \leq \mdim S \leq 5$.

\begin{prob}
    What is the matricial dimension of $\semigroup{5,7,16,18}$?   
\end{prob}

\bibliography{MainDocument}
\bibliographystyle{amsplain}

\end{document}